\documentclass[11pt]{amsart}
\usepackage{amsmath, amsthm, thmtools, amssymb, colonequals, mathrsfs, environ,
tabu, stmaryrd, tikz, enumitem, thm-restate, fullpage}
\usepackage[T1]{fontenc}

\usepackage[
%	draft,
%	colorlinks,
	backref,
	pdfauthor={Vishal Arul}, % add other authors
  pdftitle={On the ell-adic valuation of certain Jacobi sums},
]{hyperref}
\usepackage[alphabetic,backrefs,lite,nobysame]{amsrefs} % for bibliography
\usepackage{nameref, cleveref}

\hypersetup{
    colorlinks,
    linkcolor={red!50!black},
    citecolor={blue!50!black},
    urlcolor={blue!80!black}
}

\declaretheorem[name=Lemma,
style=plain,
sharenumber=theorem,
refname={Lemma,Lemmas},
Refname={Lemma,Lemmas}]{lemma}

\declaretheorem[name=Corollary,
style=plain,
sharenumber=theorem,
refname={Corollary,Corollaries},
Refname={Corollary,Corollaries}]{corollary}

\declaretheorem[name=Definition,
style=definition,
sharenumber=theorem,
refname={Definition,Definitions},
Refname={Definition,Definitions}]{definition}

% Avoid extra ``item'' whem calling autoref
\makeatletter
\renewcommand{\itemautorefname}{\@gobble}
\makeatother

% Avoid extra ``item'' whem calling cref
\crefname{enumi}{}{}

% Make the parentheses in eqref part of the link
\makeatletter
\renewcommand*{\eqref}[1]{%
  \hyperref[{#1}]{\textup{\tagform@{\ref*{#1}}}}%
}
\makeatother

\numberwithin{equation}{section}

\SetSymbolFont{stmry}{bold}{U}{stmry}{m}{n}

\DeclareMathOperator{\ind}{ind}

\makeatletter
\g@addto@macro\bfseries{\boldmath} % This makes math in section titles bold.
\makeatother

\begin{document}
\author{Vishal Arul}
\title{On the $\ell$-adic valuation of certain Jacobi sums}
\thanks{This research was supported in part by grants from the Simons Foundation
(\#402472 to Bjorn Poonen, and \#550033).}

\begin{abstract}
Jacobi sums are ubiquitous in number theory, and congruences often provide a
helpful way to study them. A $p$-adic congruence for Jacobi sums comes from
Stickelberger's congruence, and various $\ell$-adic congruences have been
studied in
\cites{evans1998congruences,ihara1986profinite,iwasawa1975note,miki1987adic,uehara1987congruence}.
We establish a new $\ell$-adic congruence for certain Jacobi sums.
\end{abstract}

\maketitle

\section{Introduction}

The Jacobi sum is usually defined as follows.
\begin{definition}
Fix a finite field $\mathbf{F}_{q}$, a field $L$, and two nontrivial
multiplicative characters $\chi, \psi : \mathbf{F}_{q}^{\times} \to L^{\times}$.
Then the Jacobi sum $J(\chi, \psi)$ is 
\[
J(\chi, \psi) \colonequals \sum_{x \in \mathbf{F}_{q} \setminus \{ 0, 1 \}}
\chi(x) \psi(1 - x) \in L.
\]
\end{definition}

Jacobi sums have various applications in number theory; see
\cite{berndt1998gauss} for many examples. They appear as Frobenius eigenvalues
for the Fermat curve in \cite{katz1981crystalline} and quotients of the Fermat
curve in \cites{ArulTorsion,jkedrzejak2014torsion,jkedrzejak2016note}. In
particular, it is useful to have congruences for Jacobi sums
\cites{conrad1995jacobi,evans1998congruences,ihara1986profinite,iwasawa1975note,miki1987adic,uehara1987congruence}.
We prove a new congruence for Jacobi sums of the type considered by Uehara
\cite{uehara1987congruence}. The main result of this paper is used in
\cite{ArulTorsion} to study torsion points on the curve $y^{n} = x^{d} + 1$,
which is a quotient of the Fermat curve $F_{n d} : X^{n d} + Y^{n d} + Z^{n d} =
0$.

In \cite{uehara1987congruence}, Uehara establishes an $\ell$-adic congruence for
Jacobi sums of the form $J(\chi, \chi^{c f})$ where $\chi :
\mathbf{F}_{q}^{\times} \to \mathbf{C}^{\times}$ is a character of order $\ell
f$ and $q \equiv 1 \bmod{\ell f}$. Certain cyclotomic units of
$\mathbf{Q}(\zeta_{\ell f})$ appear in Uehara's expansion.  Our setup will be
very similar to that of Uehara's, and we will also find a connection with
cyclotomic units.

Fix two distinct primes $\ell$ and $f$, a finite field $\mathbf{F}_{q}$
satisfying $q \equiv 1 \pmod{\ell f}$, and a primitive $\ell f$th root of unity
$\zeta_{\ell f} \in \overline{\mathbf{Q}}$. Let
\begin{align*}
L &\colonequals  \mathbf{Q}(\zeta_{\ell f}) \\
\mathcal{O}_{L} &\colonequals \mathbf{Z}[\zeta_{\ell f}] \\
\zeta_{f} &\colonequals \zeta_{\ell f}^{\ell} \\
M &\colonequals \mathbf{Q}(\zeta_{f}) \\
\mathcal{O}_{M} &\colonequals \mathbf{Z}[\zeta_{f}] \\
\zeta_{\ell} &\colonequals \zeta_{\ell f}^{f} \\
\pi_{\ell} &\colonequals \zeta_{\ell} - 1.
\end{align*}
Let $\xi_{\ell}, \xi_{f} \in \overline{\mathbf{Q}}$ be $\ell$th and $f$th roots
of unity such that 
\begin{align*}
\xi_{\ell}^f = \zeta_{\ell} \\
\xi_f^\ell = \zeta_f. 
\end{align*}
Let $\chi : \mathbf{F}_{q}^{\times} \to L^\times$ be a character of order $\ell
f$. 

Let $g$ be a generator of the multiplicative group $\mathbf{F}_{q}^{\times}$ and
abuse notation to define $\zeta_{\ell f} \colonequals g^{(q - 1) / (\ell f)}$
and $\zeta_{f}, \zeta_{\ell}, \xi_{f}, \xi_{\ell}$ analogously to be elements of
$\mathbf{F}_{q}^\times$.

\begin{lemma}
\label{Lemma:ZetaXiConnection}
$\zeta_{\ell f} = \xi_{\ell} \xi_{f}$.
\end{lemma}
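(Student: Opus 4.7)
The plan is to compute the $\ell$th and $f$th powers of the product $\xi_{\ell}\xi_{f}$, compare with the corresponding powers of $\zeta_{\ell f}$, and then exploit $\gcd(\ell, f) = 1$ to bootstrap to the full equality. This works uniformly whether we interpret the symbols in $\overline{\mathbf{Q}}^{\times}$ or in $\mathbf{F}_{q}^{\times}$, since in either setting $\xi_{\ell}$ has order dividing $\ell$, $\xi_{f}$ has order dividing $f$, and the defining relations $\xi_{\ell}^{f} = \zeta_{\ell}$, $\xi_{f}^{\ell} = \zeta_{f}$, $\zeta_{\ell f}^{\ell} = \zeta_{f}$, $\zeta_{\ell f}^{f} = \zeta_{\ell}$ hold by construction.

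First, I would raise $\xi_{\ell}\xi_{f}$ to the $\ell$th power: using $\xi_{\ell}^{\ell} = 1$, one gets $(\xi_{\ell}\xi_{f})^{\ell} = \xi_{f}^{\ell} = \zeta_{f} = \zeta_{\ell f}^{\ell}$. Symmetrically, raising to the $f$th power and using $\xi_{f}^{f} = 1$ gives $(\xi_{\ell}\xi_{f})^{f} = \xi_{\ell}^{f} = \zeta_{\ell} = \zeta_{\ell f}^{f}$. Therefore the ratio $\xi_{\ell}\xi_{f}/\zeta_{\ell f}$ has order dividing both $\ell$ and $f$, and since $\ell$ and $f$ are distinct primes this ratio must be $1$.

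Equivalently, one could conclude by Bézout: choose integers $a, b$ with $a\ell + bf = 1$ and then compute
\[
\xi_{\ell}\xi_{f} = (\xi_{\ell}\xi_{f})^{a\ell}(\xi_{\ell}\xi_{f})^{bf} = \zeta_{f}^{a}\,\zeta_{\ell}^{b} = \zeta_{\ell f}^{\ell a + f b} = \zeta_{\ell f},
\]
which is the same fact phrased through the Chinese remainder theorem.

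There is no real obstacle here; the content of the lemma is purely the CRT decomposition of the cyclic group of order $\ell f$ into its $\ell$- and $f$-parts, and one only needs to check that the chosen $\xi_{\ell}, \xi_{f}$ are compatible lifts of $\zeta_{\ell}, \zeta_{f}$ under the $f$th- and $\ell$th-power maps respectively, which is exactly how they were defined.
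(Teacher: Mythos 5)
Your proof is correct and is essentially the paper's argument: the paper likewise computes the $\ell$th and $f$th powers of $\zeta_{\ell f}/\xi_f$ and invokes coprimality of $\ell$ and $f$ (via the uniqueness of $\xi_\ell$ as the $\ell$th root of unity with $f$th power $\zeta_\ell$), which is the same mechanism as your order-divides-$\gcd(\ell,f)=1$ / B\'ezout step.
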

\begin{proof}
Since $\ell$ and $f$ are coprime, $\xi_{\ell}$ is the unique $\ell$th root of
unity such that $\xi_{\ell}^f = \zeta_{\ell}$. Since
\begin{align*}
\left(\frac{\zeta_{\ell f}}{\xi_{f}}\right)^{\ell} &=
\frac{\zeta_{f}}{\zeta_{f}} = 1; \text{ and } \\
\left(\frac{\zeta_{\ell f}}{\xi_{f}}\right)^{f} &= \frac{\zeta_{\ell}}{1} =
\zeta_{\ell},
\end{align*}
we are done.
\end{proof}

\begin{definition}
For integers $a, b$, define
\[
J(a, b) := J(\chi^{a}, \chi^{b}) = \sum_{x \in \mathbf{F}_{q} \setminus \{ 0, 1
\}} \chi^{a}(x) \chi^{b}(1 - x) \in \mathcal{O}_{L}.
\]
\end{definition}
\begin{definition}
\label{Definition:DefineEtaIJ}
For $i \in [0, \ell - 1]$ and $j \in [1, f - 1]$, define
\[
\eta_{i, j} \colonequals \prod_{r = 0}^{\ell - 1} \left( 1 - \xi_{\ell}^{r}
\xi_{f}^{j}  \right)^{\binom{r}{i}} \in \mathbf{F}_{q}^\times.
\]
\end{definition}

Our main result is the following.
\begin{restatable*}{theorem}{CongruenceUpToEllMinusOne}
\label{Theorem:CongruenceUpToEllMinus1}
For $k \in [1, \ell - 1]$, the following are equivalent:
\begin{enumerate}[label=\upshape(\arabic*),
ref=\autoref{Theorem:CongruenceUpToEllMinus1}(\arabic*)]

\item \label{Theorem:CongruenceUpToEllMinus1Congruence}
$J(\ell, f) + 1 \in \pi_{\ell}^k \mathcal{O}_{L}$;

\item \label{Theorem:CongruenceUpToEllMinus1Units}
$\eta_{i, j} \in \mathbf{F}_{q}^{\times \ell}$
for all $i \in [0, k - 2]$ and $j \in [1, f - 1]$; 

\item \label{Theorem:CongruenceUpToEllMinus1UnitsHalf}
$\eta_{i, j} \in \mathbf{F}_{q}^{\times \ell}$ for all $i \in [0, k - 2]$ and $j
\in [1, f / 2]$. 

\end{enumerate}
In particular, $J(\ell, f) + 1 \in \pi_{\ell} \mathcal{O}_{L}$ always holds.
\end{restatable*}

Our methods allow us to even reach the case $k = \ell$, which we analyze in
\autoref{Section:kEqualsEll}.

\begin{restatable*}{theorem}{CongruencekEqualsEll}
\label{Theorem:CongruencekEqualsEll}
The following are equivalent:

\begin{enumerate}[label=\upshape(\arabic*),
ref=\autoref{Theorem:CongruencekEqualsEll}(\arabic*)]

\item \label{Theorem:CongruencekEqualsEllCongruence}
$J(\ell, f) + 1 \in \pi_{\ell}^{\ell} \mathcal{O}_{L}$

\item \label{Theorem:CongruencekEqualsEllUnits}
$q \equiv 1 \pmod{\ell^{2} f}$ and $1 - \xi_{\ell}^{i} \xi_{f}^{j} \in
\mathbf{F}_{q}^{\times \ell}$ for all $i \in [0, \ell - 1]$ and $j \in [1, f -
1]$;

\item \label{Theorem:CongruencekEqualsEllUnitsHalf}
$q \equiv 1 \pmod{\ell^{2} f}$ and $1 - \xi_{\ell}^{i} \xi_{f}^{j} \in
\mathbf{F}_{q}^{\times \ell}$ for all $i \in [0, \ell - 1]$ and $j \in [1, f /
2]$.

\end{enumerate}
\end{restatable*}

\section{A few properties of binomial coefficients}
\begin{lemma}
\label{Lemma:BinomialCongruences}\hfill
\begin{enumerate}[label=\upshape(\arabic*),
ref=\autoref{Lemma:BinomialCongruences}(\arabic*)]

\item \label{Lemma:BinomialCongruencesSymmetry}
For $a \in \mathbf{Z}$ and $b \in [0, a]$,
\[
\binom{a}{b} = \binom{a}{a - b}.
\]

\item \label{Lemma:BinomialCongruencesDefinition}
For $a \in \mathbf{Z}$ and $b \in \mathbf{Z}_{\ge 0}$,
\[
\binom{a}{b + 1} = \frac{a}{b + 1} \binom{a - 1}{b}.
\]

\item \label{Lemma:BinomialCongruencesPascal}
For $a \in \mathbf{Z}$ and $b \in \mathbf{Z}_{\ge 0}$,
\[
\binom{a}{b + 1} = \binom{a - 1}{b} + \binom{a - 1}{b + 1}.
\]

\item \label{Lemma:BinomialCongruencesPascalModified}
For $a \in \mathbf{Z}$ and $b \in \mathbf{Z}_{\ge 0}$,
\[
a \binom{a}{b} = (b + 1) \binom{a}{b + 1} + b \binom{a}{b}.
\]

\item \label{Lemma:BinomialCongruencesHockeystick}
For $a \in \mathbf{Z}$ and $b \in \mathbf{Z}_{\ge 0}$,
\[
\sum_{c = 0}^{a - 1} \binom{c}{b} = \binom{a}{b + 1}.
\]

\item \label{Lemma:BinomialCongruencesVandermonde}
For $a, b \in \mathbf{Z}$ and $c \in \mathbf{Z}_{\ge 0}$,
\[
\binom{a + b}{c} = \sum_{d = 0}^{c} \binom{a}{d} \binom{b}{c - d}.
\]

\item \label{Lemma:BinomialCongruencesNegative}
For $a \in \mathbf{Z}$ and $b \in \mathbf{Z}_{\ge 0}$,
\[
\binom{-a}{b} = (-1)^{b} \sum_{c = 0}^{b} \binom{b - 1}{b - c} \binom{a}{c}.
\]

\item \label{Lemma:BinomialCongruencesPascalModifiedHockeystick}
For $a \in \mathbf{Z}$ and $b \in \mathbf{Z}_{\ge 0}$,
\[
\sum_{c = 0}^{a - 1} c \binom{c}{b} = (a - 1) \binom{a}{b + 1} - \binom{a}{b +
2}. 
\]

\item \label{Lemma:BinomialCongruencesReduceTop}
For $a_{1}, a_{2} \in \mathbf{Z}$ and $b \in [0, \ell - 1]$ such that $a_{1}
\equiv a_{2} \pmod{\ell}$, 
\[
\binom{a_1}{b} \equiv \binom{a_2}{b} \pmod{\ell}.
\]

\item \label{Lemma:BinomialCongruences0}
For $a \in \mathbf{Z}$ and $b \in \mathbf{Z}_{\ge 0}$ such that $a \equiv 0
\pmod{\ell}$, $b \not\equiv 0 \pmod{\ell}$, 
\[
\binom{a}{b} \equiv 0 \pmod{\ell}.
\]

\item \label{Lemma:BinomialCongruencesReduce}
For $a \in \mathbf{Z}$ and $b \in \mathbf{Z}_{\ge 0}$,
\[
\binom{a \ell}{b \ell} \equiv \binom{a}{b} \pmod{\ell}.
\]

\end{enumerate}
\end{lemma}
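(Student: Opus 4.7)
The plan is to dispatch the eleven items in three tiers, beginning with the purely combinatorial identities over $\mathbf{Z}$ and ending with the three $\ell$-adic congruences. Since every assertion is classical or very close to it, the real work will be organizational; the only step requiring mild arithmetic massaging is (8), and this is the step I would double-check most carefully, though it presents no conceptual obstacle.

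For the algebraic identities, (1)--(3) and (6) would follow immediately from the polynomial definition $\binom{a}{b} = a(a-1)\cdots(a-b+1)/b!$ (for (1) and the trivial recursion (2)), Pascal's rule (for (3)), and coefficient comparison in $(1+x)^{a+b} = (1+x)^{a}(1+x)^{b}$ (for Vandermonde (6)). Identity (4) I would derive by splitting $a = (a-b) + b$ and invoking the rephrasing $(a-b)\binom{a}{b} = (b+1)\binom{a}{b+1}$ of (2). The hockey-stick identity (5) falls out by induction on $a$ with (3) providing the inductive step. For (7) I would start from the standard $\binom{-a}{b} = (-1)^{b} \binom{a+b-1}{b}$ and apply Vandermonde (6) to $\binom{a + (b-1)}{b}$. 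For (8), I would sum the identity $c\binom{c}{b} = (b+1)\binom{c}{b+1} + b\binom{c}{b}$ (an instance of (4)) over $c \in [0, a-1]$, apply (5) twice to obtain $(b+1)\binom{a}{b+2} + b\binom{a}{b+1}$, and then use the relation $(b+2)\binom{a}{b+2} = (a-1-b)\binom{a}{b+1}$ (again (2)) to rearrange into the claimed form $(a-1)\binom{a}{b+1} - \binom{a}{b+2}$.

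The $\ell$-adic assertions (9)--(11) all exploit that $\ell$ is prime. For (9), since $b \le \ell - 1$, the factorial $b!$ is a unit mod $\ell$, so $\binom{a}{b} \bmod \ell$ is the reduction of the polynomial expression $a(a-1)\cdots(a-b+1)/b!$ in $a$, which manifestly depends only on $a \bmod \ell$. For (10), the identity $b \binom{a}{b} = a \binom{a-1}{b-1}$ combined with $\ell \mid a$ and $\ell \nmid b$ forces $\ell \mid \binom{a}{b}$. For (11), I would invoke Frobenius in $\mathbf{F}_{\ell}[x]$: from $(1+x)^{\ell} \equiv 1 + x^{\ell} \pmod{\ell}$ one obtains $(1+x)^{a\ell} \equiv (1 + x^{\ell})^{a} \pmod{\ell}$, and reading off the coefficient of $x^{b\ell}$ on each side yields $\binom{a\ell}{b\ell} \equiv \binom{a}{b} \pmod{\ell}$.
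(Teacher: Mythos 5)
Your proposal is correct and follows essentially the same route as the paper item by item: polynomial definition for (1)--(4), induction via Pascal for (5), generating functions for (6), the identity $\binom{-a}{b}=(-1)^{b}\binom{a+b-1}{b}$ plus Vandermonde for (7), summing (4) and telescoping with (5) for (8), polynomial dependence on $a \bmod \ell$ for (9), and the Frobenius identity $(1+x)^{a\ell}\equiv(1+x^{\ell})^{a}$ for (11). The only divergence is item (10), where you use the absorption identity $b\binom{a}{b}=a\binom{a-1}{b-1}$ rather than the paper's coefficient-of-$(1+x^{\ell})^{a/\ell}$ argument; your version is equally valid (arguably cleaner, since it avoids interpreting $(1+x)^{a}$ when $a$ may be negative).
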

\begin{proof}
For \autoref{Lemma:BinomialCongruencesSymmetry}, use 
\[
\binom{n}{k} = \frac{n!}{k! (n - k)!}. 
\]
For \autoref{Lemma:BinomialCongruencesDefinition} --
\autoref{Lemma:BinomialCongruencesPascalModified}, use 
\[
\binom{x}{k} = \frac{x (x - 1) \cdots (x - (k - 1)) }{ k! }.
\]

\begin{enumerate}[label=\upshape(\arabic*),
ref={the proof of \autoref{Lemma:BinomialCongruences}(\arabic*)}]
\setcounter{enumi}{4}

\item \label{LemmaProof:BinomialCongruencesHockeystick}
Induct on $a$ and use \autoref{Lemma:BinomialCongruencesPascal}.

\item \label{LemmaProof:BinomialCongruencesVandermonde}
This is Vandermonde's identity for binomial coefficients, and it follows
by comparing the $x^{c}$-coefficient of both sides of $(1 + x)^{a + b} = (1 +
x)^{a} (1 + x)^{b}$.

\item \label{LemmaProof:BinomialCongruencesNegative}
Note that 
\[
\binom{-a}{b} = \frac{(-a)(-a - 1)\cdots(-a - (b - 1))}{b!} = (-1)^{b} \binom{a + b - 1}{b},
\]
so we are done by applying \autoref{Lemma:BinomialCongruencesVandermonde}.

\item \label{LemmaProof:BinomialCongruencesPascalModifiedHockeystick}
We have
\begin{align*}
\sum_{c = 0}^{a - 1} c \binom{c}{b} &= \sum_{c = 0}^{a - 1} \left( (b + 1)
\binom{c}{b + 1} + b \binom{c}{b} \right) &&\text{(by
\autoref{Lemma:BinomialCongruencesPascalModified})} \\
&= (b + 1) \binom{a}{b + 2} + b \binom{a}{b + 1} &&\text{(by
\autoref{Lemma:BinomialCongruencesHockeystick})} \\
&= a \binom{a}{b + 1} - \left(\binom{a}{b + 2} + \binom{a}{b + 1} \right)
&&\text{(by \autoref{Lemma:BinomialCongruencesPascalModified})} \\
&= (a - 1) \binom{a}{b + 1} - \binom{a}{b + 2}.
\end{align*}

\item \label{LemmaProof:BinomialCongruencesReduceTop}
Consider the polynomial $q(x) \colonequals \binom{x}{b} \in
\mathbf{F}_{\ell}[x]$. It follows from $b! p(x) = x (x - 1) \cdots (x - (b -
1))$ that $b! p(a_1) \equiv b!  p(a_2) \pmod{\ell}$. Since $b \in [0, \ell -
1]$, $b!$ is invertible modulo $\ell$, so we may divide both sides by $b!$ to
get $p(a_1) \equiv p(a_2) \pmod{\ell}$.

\item \label{LemmaProof:BinomialCongruences0}
For any $i$, note that $\binom{a}{i} \pmod{\ell}$ is the $x^i$-coefficient of
the polynomial $p(x) \colonequals (1 + x)^a \in \mathbf{F}_{\ell}[x]$. We have
$p(x) = ((1 + x)^{\ell})^{a / \ell} = (1 + x^{\ell})^{a / \ell}$, so since $b
\nmid \ell$, $\binom{a}{b} = [x^b] p(x) \equiv 0 \pmod{\ell}$.

\item \label{LemmaProof:BinomialCongruencesReduce}
As in the previous part, define $p(x) \colonequals (1 + x)^a \in
\mathbf{F}_{\ell}[x]$. Then 
\begin{equation}
\label{Equation:BinomabAsCoefficient}
\binom{a}{b} \equiv [x^{b \ell}] p(x^{\ell}) \pmod{\ell},
\end{equation}
and since $p(x^{\ell}) = (1 + x^{\ell})^{a} = (1 + x)^{a \ell}$,
\begin{equation}
\label{Equation:BinomaellbellAsCoefficient}
[x^{b \ell}] p(x^{\ell}) \equiv \binom{a \ell}{b \ell} \pmod{\ell},
\end{equation}
so we finish by combining \eqref{Equation:BinomabAsCoefficient} and
\eqref{Equation:BinomaellbellAsCoefficient}. \qedhere
\end{enumerate}
\end{proof}

\section{The index}

Recall that $g$ is a generator of the multiplicative group
$\mathbf{F}_{q}^{\times}$.
\begin{definition}
For $x \in \mathbf{F}_{q}^\times$, define $\ind(x) \in \{ 0, 1, \cdots, q - 2
\}$ such that
\[
x = g^{\ind x}.
\]
Then by definition of $\zeta_{\ell f}$, 
\begin{equation}
\label{Equation:IndZetaEllf}
\ind \zeta_{\ell f} = \frac{q - 1}{\ell f}.
\end{equation}
\end{definition}

\begin{lemma}
\label{Lemma:IndRange}
$\displaystyle \{ \ind x : x \in \mathbf{F}_{q} \setminus \{ 0, 1 \} \} = \{ 1,
2, \dots, q - 2 \}$.
\end{lemma}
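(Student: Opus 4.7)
The plan is to observe that the statement is essentially a direct unwinding of the definition of $\ind$. Since $g$ is a generator of the cyclic group $\mathbf{F}_{q}^{\times}$ of order $q-1$, the map sending $k \in \{0, 1, \ldots, q-2\}$ to $g^{k}$ is a bijection onto $\mathbf{F}_{q}^{\times}$. Its inverse is precisely $\ind$, so $\ind \colon \mathbf{F}_{q}^{\times} \to \{0, 1, \ldots, q-2\}$ is a bijection.

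Next, I would identify the preimage of $0$ under $\ind$: by definition $\ind x = 0$ iff $x = g^{0} = 1$, so removing $1$ from the domain removes $0$ from the image. Thus restricting $\ind$ to $\mathbf{F}_{q}^{\times} \setminus \{1\} = \mathbf{F}_{q} \setminus \{0, 1\}$ yields a bijection onto $\{1, 2, \ldots, q-2\}$, which in particular gives the claimed equality of sets. There is no real obstacle here; the entire content is that $\ind$ is the inverse of $k \mapsto g^{k}$ and that $\ind 1 = 0$.
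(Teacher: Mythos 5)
Your proof is correct and matches the paper's approach, which simply notes that the claim is immediate from the definition of $\ind$ together with $\ind(1) = 0$; you have just spelled out the bijection $k \mapsto g^{k}$ explicitly. Nothing further is needed.
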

\begin{proof}
This is immediate by the definition of $\ind$ since $\ind(1) = 0$.
\end{proof}

\begin{lemma}
\label{Lemma:IndyzAdditive}
For $y, z \in \mathbf{F}_{q}^\times$, $\ind(y z) \equiv \ind y + \ind z \pmod{q
- 1}$.
\end{lemma}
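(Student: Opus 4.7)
The plan is to unwind the definition of $\ind$ and use that $g$ has multiplicative order exactly $q - 1$ in $\mathbf{F}_{q}^{\times}$. By definition, $y = g^{\ind y}$ and $z = g^{\ind z}$, so multiplying gives
\[
y z = g^{\ind y + \ind z}.
\]
On the other hand, $y z = g^{\ind(y z)}$ by definition. Therefore $g^{\ind(y z) - (\ind y + \ind z)} = 1$, and since $g$ has order $q - 1$, the exponent must be $0 \pmod{q - 1}$.

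There is no real obstacle here; the only thing worth double-checking is that $y z \in \mathbf{F}_{q}^{\times}$ (which is immediate because $\mathbf{F}_{q}^{\times}$ is a group) so $\ind(y z)$ is well-defined, and that $\ind y, \ind z$ are well-defined integers in $\{0, 1, \dots, q - 2\}$ by the preceding definition. Modular reduction of the exponent is exactly what produces the stated congruence, and the equality of indices as integers in $\{0, 1, \dots, q - 2\}$ need not hold (for instance if $\ind y + \ind z \ge q - 1$), which is why the lemma is stated only up to congruence modulo $q - 1$.
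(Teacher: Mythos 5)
Your proof is correct and is simply a spelled-out version of the paper's argument, which states that the congruence follows immediately from the definition of $\ind$ (i.e., exactly the exponent manipulation and the fact that $g$ has order $q-1$ that you make explicit). No differences in approach to note.
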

\begin{proof}
This follows immediately from the definition of $\ind$.
\end{proof}

\begin{lemma}
\label{Lemma:CyclotomicProduct}
For $r \in [0, \ell - 1]$ and $j \in [1, f - 1]$,
\[
\sum_{\substack{a \in [1, q - 2] \\ a \equiv j \pmod{f} \\ a \equiv r \pmod
{\ell}}}  \ind(1 - g^{a}) \equiv  \ind \left( 1 - \xi_{\ell}^{r} \xi_{f}^{j}
 \right) \pmod{q - 1}.
\]
\end{lemma}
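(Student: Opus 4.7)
The plan is to interpret the sum of indices as the index of a product via \autoref{Lemma:IndyzAdditive}, evaluate that product in closed form, and recognize the result as $1 - \xi_\ell^r \xi_f^j$.

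First, iterating \autoref{Lemma:IndyzAdditive} converts the left-hand side into
\[
\ind \prod_{\substack{a \in [1, q-2] \\ a \equiv r \pmod \ell \\ a \equiv j \pmod f}} (1 - g^a) \pmod{q - 1}.
\]
Since $\gcd(\ell, f) = 1$, the Chinese remainder theorem produces a unique $s \in [0, \ell f - 1]$ with $s \equiv r \pmod{\ell}$ and $s \equiv j \pmod{f}$; because $j \ge 1$, we have $s \ge 1$. Setting $m \colonequals (q - 1)/(\ell f)$, the $a$'s indexing the product are exactly $s + k \ell f$ for $k = 0, 1, \dots, m - 1$, all of which lie in $[1, q - 2]$.

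Next, let $h \colonequals g^{\ell f}$, a primitive $m$-th root of unity in $\mathbf{F}_q^\times$, and $u \colonequals g^s$. The product becomes $\prod_{k=0}^{m-1}(1 - u h^k)$. Starting from the standard factorization $\prod_{k=0}^{m-1}(X - h^k) = X^m - 1$, substituting $X = 1/u$ and multiplying through by $u^m$ yields
\[
\prod_{k=0}^{m-1}(1 - u h^k) = 1 - u^m.
\]
Therefore the product equals $1 - g^{s m} = 1 - \zeta_{\ell f}^{s}$, using $g^{(q-1)/(\ell f)} = \zeta_{\ell f}$.

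Finally, by \autoref{Lemma:ZetaXiConnection}, $\zeta_{\ell f} = \xi_\ell \xi_f$, so $\zeta_{\ell f}^s = \xi_\ell^s \xi_f^s = \xi_\ell^r \xi_f^j$ because $\xi_\ell$ and $\xi_f$ have orders $\ell$ and $f$ respectively. Applying $\ind$ to the identity of elements of $\mathbf{F}_q^\times$ thus obtained gives the desired congruence modulo $q - 1$. The only genuine computation is the product identity $\prod_k (1 - u h^k) = 1 - u^m$; the remainder is bookkeeping via CRT and the definitions of the roots of unity.
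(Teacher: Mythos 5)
Your proposal is correct and follows essentially the same route as the paper: both identify the indexing set as the single residue class mod $\ell f$ determined by CRT, convert the sum of indices into the index of the product $\prod_{k}(1 - g^{s + k\ell f})$ via \autoref{Lemma:IndyzAdditive}, evaluate that product as $1 - \zeta_{\ell f}^{s}$ using the factorization of $1 - X^{(q-1)/(\ell f)}$, and finish with \autoref{Lemma:ZetaXiConnection}. The only cosmetic difference is that you derive the product identity from $\prod_k (X - h^k) = X^m - 1$ by the substitution $X = 1/u$, whereas the paper substitutes $X = g^a$ directly into $\prod_k (1 - g^{k\ell f}X) = 1 - X^{(q-1)/(\ell f)}$.
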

\begin{proof}
Take the equality 
\begin{align*}
\prod_{k = 0}^{\frac{q - 1}{\ell f} - 1}(1 - g^{k \ell f} X) = 1 -
X^{\frac{q - 1}{\ell f}} \quad\quad\text{in }\mathbf{F}_{q}[X]
\end{align*}
and substitute $X = g^{a}$ to obtain
\begin{align*}
\prod_{k = 0}^{\frac{q - 1}{\ell f} - 1}(1 - g^{a + k \ell f}) &= 1 -
g^{a \left(\frac{q - 1}{\ell f}\right)} \\
&= 1 - \zeta_{\ell f}^{a} \\
&= 1 - \xi_\ell^a \xi_f^a  &\text{(by \autoref{Lemma:ZetaXiConnection})} \\
&= 1 - \xi_\ell^r \xi_f^j ,
\end{align*}
so we are done by taking $\ind$ of both sides and using
\autoref{Lemma:IndyzAdditive}.
\end{proof}

\begin{definition}
For integers $a$ and $b$, define
\[
\delta_{a, b} = \begin{cases}
1 &\text{if } a = b \\
0 &\text{otherwise.}
\end{cases}
\]
\end{definition}

\begin{lemma}\hfill
\label{Lemma:LastIndCongruences}
\begin{enumerate}[label=\upshape(\arabic*),
ref=\autoref{Lemma:LastIndCongruences}(\arabic*)]

\item \label{Lemma:LastIndCongruencesEta0j}
For $m \in [1, f - 1]$,
\[
\eta_{0, m} = 1 - \xi_{f}^{m \ell}.
\]

\item \label{Lemma:LastIndCongruencesXif}
We have 
\begin{align}
\ind \xi_{f} &\equiv 0 \pmod{\ell} \label{Equation:IndXif} \\
\ind \xi_{\ell} &\equiv \frac{q - 1}{\ell f} \pmod{\ell}
\label{Equation:IndXiell}.
\end{align}

\item \label{Lemma:LastIndCongruencesEtaXi}
For $i \in [0, \ell - 1]$ and $j \in [1, f - 1]$,
\[
\ind \eta_{i, j} \equiv \sum_{r = 0}^{\ell - 1} \binom{r}{i} \ind \left( 1 -
\xi_\ell^r \xi_{f}^j \right) \pmod{q - 1}.
\]

\item \label{Lemma:LastIndCongruencesEtaEta}
For $i \in [0, \ell - 1]$ and $j \in [1, f - 1]$,
\begin{align*}
&\ind \eta_{i, f - j}  \\
&\equiv -\delta_{i, \ell - 1} \left( \ind(-1) - \left( \frac{q - 1}{\ell f}
\right) \right) - \delta_{i, \ell - 2} \left( \frac{q - 1}{\ell f} \right) +
(-1)^{i}\sum_{k = 0}^{i}\binom{i - 1}{i - k} \ind \eta_{k, j} \\
&\quad\pmod{\ell}.
\end{align*}

\item \label{Lemma:LastIndCongruencesXiEta}
Suppose that $i \in [1, \ell - 1]$, $j \in [1, f - 1]$, and $m \in [1, f - 1]$
are such that $m \ell \equiv j \pmod{f}$. Then
\[
\ind\left( 1 - \xi_\ell^i \xi_f^j  \right) \equiv \ind \eta_{0, m} - \sum_{s =
\ell - 1 - i}^{\ell - 2} \; \sum_{a = 0}^{s} \binom{s}{a} \ind \eta_{\ell - 2 -
a, j} \pmod{\ell}.
\]

\item \label{Lemma:LastIndCongruencesXiXi}
For $i \in [1, \ell - 1]$ and $j \in [1, f - 1]$,
\[
\ind \left( 1 - \xi_{\ell}^{i} \xi_f^{j}  \right) \equiv \ind(-1) + i \left(
\frac{q - 1}{\ell f} \right) + \ind \left( 1 - \xi_{\ell}^{\ell - i} \xi_f^{f -
j}  \right) \pmod {\ell}.
\]
\end{enumerate}
\end{lemma}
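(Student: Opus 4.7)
For \autoref{Lemma:LastIndCongruencesEta0j}, \autoref{Lemma:LastIndCongruencesXif}, \autoref{Lemma:LastIndCongruencesEtaXi}, and \autoref{Lemma:LastIndCongruencesXiXi}, I would unpack the definitions. For \autoref{Lemma:LastIndCongruencesEta0j}, since $\binom{r}{0} = 1$, the product in \autoref{Definition:DefineEtaIJ} collapses to $\prod_{r=0}^{\ell-1}(1 - \xi_\ell^r \xi_f^m)$, which factors as $1 - (\xi_f^m)^\ell$. For \autoref{Lemma:LastIndCongruencesXif}, observe that $\xi_f$ has order dividing $f$, so $\ind \xi_f$ is a multiple of $(q-1)/f$; this is divisible by $\ell$ because $\ell f \mid q - 1$. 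The analogous claim for $\xi_\ell$ comes from writing $\ind \xi_\ell = k(q-1)/\ell$ and using $\xi_\ell^f = \zeta_\ell$ to force $kf \equiv 1 \pmod \ell$, after which a short manipulation yields the stated congruence. \autoref{Lemma:LastIndCongruencesEtaXi} is immediate from \autoref{Definition:DefineEtaIJ} via \autoref{Lemma:IndyzAdditive}. \autoref{Lemma:LastIndCongruencesXiXi} follows by taking $\ind$ of the identity $1 - \xi_\ell^i \xi_f^j = -\xi_\ell^i \xi_f^j(1 - \xi_\ell^{\ell-i}\xi_f^{f-j})$ (using $\xi_\ell^\ell = \xi_f^f = 1$) and reducing modulo $\ell$ with \autoref{Lemma:LastIndCongruencesXif}.

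For \autoref{Lemma:LastIndCongruencesEtaEta}, the main computational step, I would first apply \autoref{Lemma:LastIndCongruencesEtaXi} to $\eta_{i, f-j}$, then use \autoref{Lemma:LastIndCongruencesXiXi} (with indices $(r, f-j)$) to rewrite each $\ind(1 - \xi_\ell^r \xi_f^{f-j})$ modulo $\ell$ as $\ind(-1) + r(q-1)/(\ell f) + \ind(1 - \xi_\ell^{\ell-r}\xi_f^j)$. This produces three sums. The first, $\ind(-1)\sum_r \binom{r}{i} = \ind(-1)\binom{\ell}{i+1}$ by \autoref{Lemma:BinomialCongruencesHockeystick}, survives modulo $\ell$ only at $i = \ell - 1$. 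The second, $(q-1)/(\ell f)\sum_r r\binom{r}{i}$, is evaluated via \autoref{Lemma:BinomialCongruencesPascalModifiedHockeystick} as $(\ell-1)\binom{\ell}{i+1} - \binom{\ell}{i+2}$, which reduces modulo $\ell$ to the $\delta_{i, \ell-1}$ and $\delta_{i, \ell-2}$ corrections. The third sum is handled via the substitution $s = \ell - r$, then \autoref{Lemma:BinomialCongruencesReduceTop} to replace $\binom{\ell - s}{i}$ with $\binom{-s}{i}$ modulo $\ell$, \autoref{Lemma:BinomialCongruencesNegative} to expand $\binom{-s}{i}$ as a combination of $\binom{s}{c}$, a swap of summation order, and finally \autoref{Lemma:LastIndCongruencesEtaXi} applied in reverse to recognize the inner sum as $\ind \eta_{c, j}$. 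The main obstacle will be tracking the $r = 0$ boundary term so that its contribution is correctly absorbed into the Kronecker-delta corrections.

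For \autoref{Lemma:LastIndCongruencesXiEta}, the plan is to invert the linear relation from \autoref{Lemma:LastIndCongruencesEtaXi} modulo $\ell$. The $\ell \times \ell$ matrix $(\binom{r}{i})_{i, r \in [0, \ell-1]}$ is upper-triangular with unit diagonal, hence invertible, and standard binomial inversion gives its inverse entries as $(-1)^{r-i}\binom{r}{i}$. This yields $\ind(1 - \xi_\ell^i \xi_f^j) \equiv \sum_r (-1)^{r-i}\binom{r}{i}\ind \eta_{r, j} \pmod \ell$. To reach the stated nested-sum form, I would identify $\ind \eta_{0, m}$ with $\ind(1 - \xi_f^j)$ via \autoref{Lemma:LastIndCongruencesEta0j} (since $\xi_f^{m\ell} = \xi_f^j$), apply the $i = 0$ case of the inversion to express $\ind \eta_{0, m} \equiv \sum_r (-1)^r \ind \eta_{r, j} \pmod \ell$, subtract to get $\ind(1 - \xi_\ell^i \xi_f^j) - \ind \eta_{0, m}$ as a combination of $\ind \eta_{r, j}$'s, and reindex via $a = \ell - 2 - r$ using Pascal-type identities from \autoref{Lemma:BinomialCongruences} to recover the target double sum. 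The hard part will be this final combinatorial massaging, which is routine but delicate.
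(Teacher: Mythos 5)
Your plan is correct, and for parts (1), (2), (3), (4), and (6) it is essentially the paper's own argument: the only cosmetic differences are that the paper deduces $\ind \xi_{\ell} \equiv \frac{q-1}{\ell f}$ from $\zeta_{\ell f} = \xi_{\ell}\xi_{f}$ (\autoref{Lemma:ZetaXiConnection}) rather than from $\xi_{\ell}^{f} = \zeta_{\ell}$, and in (4) it expands $\ind(1 - \xi_{\ell}^{r}\xi_{f}^{-j})$ directly with \autoref{Lemma:IndyzAdditive} instead of citing part (6) (which, as you note, requires handling $r = 0$ separately if quoted verbatim). One heads-up on (4): executing the computation as you describe produces $+\delta_{i,\ell-1}\left(\ind(-1) - \frac{q-1}{\ell f}\right)$, which matches the paper's intermediate display \eqref{Equation:ExpandEtaifmj} but not the sign printed in the lemma statement; the discrepancy is immaterial for the paper's applications, which only invoke (4) with $i \le \ell - 2$.

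Where you genuinely diverge is part (5). The paper proves it by direct verification: substitute part (3) into the right-hand side, collapse $\sum_{a}\binom{s}{a}\binom{r}{\ell-2-a}$ to $\binom{r+s}{\ell-2}$ via \autoref{Lemma:BinomialCongruencesVandermonde}, observe that modulo $\ell$ this is the indicator of $r \equiv \ell-2-s$ minus that of $r \equiv \ell-1-s$, and telescope over $s$. You instead invert the unitriangular Pascal relation of part (3) to get the closed form $\ind(1-\xi_{\ell}^{i}\xi_{f}^{j}) \equiv \sum_{r}(-1)^{r-i}\binom{r}{i}\ind\eta_{r,j} \pmod{\ell}$ and then reconcile it with the stated nested sum. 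Both routes work: your closed form agrees with the stated formula (I checked small cases), and the final reconciliation is a valid binomial identity provable from \autoref{Lemma:BinomialCongruencesHockeystick}, \autoref{Lemma:BinomialCongruencesReduceTop}, and \autoref{Lemma:BinomialCongruencesNegative}. Your approach buys a cleaner and more reusable inverse formula, at the cost of an extra layer of combinatorial bookkeeping; the paper's verification is shorter precisely because it never needs the inverse matrix explicitly.
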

\begin{proof}\hfill
\begin{enumerate}[label=\upshape(\arabic*),
ref={the proof of \autoref{Lemma:LastIndCongruences}(\arabic*)}]

\item \label{LemmaProof:LastIndCongruencesEta0j}
Take the equality
\begin{align*}
\prod_{r = 0}^{\ell - 1} (1 - \xi_{\ell}^r X) = 1 - X^{\ell} \quad\quad\text{in
}\mathbf{F}_{q}[X]
\end{align*}
and substitute $X = \xi_{f}^{m}$ to obtain
\begin{align*}
\eta_{0, m} &= \prod_{r = 0}^{\ell - 1} (1 - \xi_{\ell}^r \xi_{f}^{m})\\
&= 1 - (\xi_{f}^{m})^{\ell}. 
\end{align*}

\item \label{LemmaProof:LastIndCongruencesXif}
Since $\xi_{f}$ is an $f$th root of unity and $\mathbf{F}_{q}$ contains a
primitive $\ell f$th root of unity, $\xi_{f} \in \mathbf{F}_{q}^{\times \ell}$;
\eqref{Equation:IndXif} follows. Taking $\ind$ of both sides of
\autoref{Lemma:ZetaXiConnection} and using \autoref{Lemma:IndyzAdditive} yields
$\ind \zeta_{\ell f} \equiv \ind \xi_{\ell} + \ind \xi_{f} \pmod{q - 1}$, so
\eqref{Equation:IndXiell} follows from \eqref{Equation:IndZetaEllf} and
\eqref{Equation:IndXif}.

\item \label{LemmaProof:LastIndCongruencesEtaXi}
Take $\ind$ of both sides of \autoref{Definition:DefineEtaIJ}.

\item \label{LemmaProof:LastIndCongruencesEtaEta}
Modulo $\ell$, we have
\begin{align}
&\ind \eta_{i, f - j} \nonumber\\
&\equiv \sum_{r = 0}^{\ell - 1} \binom{r}{i} \ind \left( 1 - \xi_\ell^r
\xi_{f}^{- j} \right)  \nonumber\\
&\quad\text{(by \autoref{Lemma:LastIndCongruencesEtaXi})}
\nonumber\\
&\equiv \sum_{r = 0}^{\ell - 1} \binom{r}{i} \left( \ind(-1) + r \ind \xi_{\ell}
- j \ind \xi_{f} + \ind \left( 1 - \xi_\ell^{-r} \xi_{f}^j \right) \right)
\nonumber\\
&\quad\text{(by \autoref{Lemma:IndyzAdditive})} \nonumber\\
&\equiv \ind(-1) \left(\sum_{r = 0}^{\ell - 1} \binom{r}{i} \right) + \left(
\frac{q - 1}{\ell f} \right) \left( \sum_{r = 0}^{\ell - 1} r \binom{r}{i}
\right) + \sum_{r = 0}^{\ell - 1}\ind \left( 1 - \xi_\ell^{-r} \xi_{f}^j \right)
\nonumber\\
&\quad \text{(by \autoref{Lemma:LastIndCongruencesXif})} \nonumber\\
&\equiv \ind(-1) \binom{\ell}{i + 1} + \left(
\frac{q - 1}{\ell f} \right) \left( (\ell - 1)
\binom{\ell}{i + 1} - \binom{\ell}{i + 2} \right) \nonumber\\
&\quad+ \sum_{r = 0}^{\ell - 1}
\binom{r}{i} \ind \left( 1 - \xi_\ell^{-r} \xi_{f}^j \right) \nonumber\\
&\quad \text{(by \autoref{Lemma:BinomialCongruencesHockeystick} and
\autoref{Lemma:BinomialCongruencesPascalModifiedHockeystick})} \nonumber\\
&\equiv \delta_{i, \ell - 1} \left( \ind(-1) - \left( \frac{q - 1}{\ell f}
\right) \right) - \delta_{i, \ell - 2} \left( \frac{q - 1}{\ell f} \right) +
\sum_{r = 0}^{\ell - 1} \binom{r}{i} \ind \left( 1 - \xi_\ell^{-r} \xi_{f}^j
\right),
\label{Equation:ExpandEtaifmj}
\end{align}
since $\binom{\ell}{k}$ is divisible by $\ell$ except when $k \in \{ 0, \ell
\}$, in which case it equals 1 (and we assume that $i \in [0, \ell - 1]$).
Change variables in the last sum to $s \in [0, \ell - 1]$ such that $s \equiv -r
\pmod{\ell}$ (the values $\binom{r}{i}$ and $\xi_{\ell}^{r}$ only depend on $r
\pmod {\ell}$ by \autoref{Lemma:BinomialCongruencesReduceTop} and by definition
of $\xi_{\ell}$).  This yields
\begin{align}
\sum_{r = 0}^{\ell - 1} \binom{r}{i} \ind \left( 1 - \xi_\ell^{-r} \xi_{f}^j
\right) &= \sum_{s = 0}^{\ell - 1} \binom{-s}{i} \ind \left( 1 - \xi_\ell^{s}
\xi_{f}^j \right) \nonumber\\
&\equiv (-1)^{i} \sum_{s = 0}^{\ell - 1}
\sum_{k = 0}^{i} \binom{i - 1}{i - k} \binom{s}{k} \ind \left( 1 - \xi_\ell^{s}
\xi_{f}^j \right) \nonumber\\
&\quad\text{(by \autoref{Lemma:BinomialCongruencesNegative})} \nonumber\\
&\equiv (-1)^{i}\sum_{k = 0}^{i}\binom{i - 1}{i - k} \ind \eta_{k, j}
\label{Equation:FlipSignEta}
\end{align}
by \autoref{Lemma:LastIndCongruencesEtaXi}. We finish by combining
\eqref{Equation:ExpandEtaifmj} and \eqref{Equation:FlipSignEta}.

\item \label{LemmaProof:LastIndCongruencesXiEta}
Modulo $\ell$, we have
\begin{align*}
&\sum_{s = \ell - 1 - i}^{\ell - 2} \; \sum_{a = 0}^{s} \binom{s}{a} \ind
\eta_{\ell - 2 - a, j} \\
&\equiv \sum_{s = \ell - 1 - i}^{\ell - 2} \;  \sum_{r = 0}^{\ell - 1} \;
\sum_{a = 0}^{s} \binom{s}{a} \binom{r}{\ell - 2 - a} \ind\left( 1 -
\xi_{\ell}^r \xi_{f}^{j} \right) &&\text{(by
\autoref{Lemma:LastIndCongruencesEtaXi})} \\
&= \sum_{s = \ell - 1 - i}^{\ell - 2} \;  \sum_{r = 0}^{\ell - 1} \binom{r +
s}{\ell - 2} \ind\left( 1 - \xi_{\ell}^r \xi_{f}^{j} \right) &&\text{(by
\autoref{Lemma:BinomialCongruencesVandermonde})} \\
&\equiv \sum_{s = \ell - 1 - i}^{\ell - 2} \left( \ind\left( 1 -
\xi_{\ell}^{\ell - 2 - s} \xi_{f}^{j} \right) - \ind\left( 1 - \xi_{\ell}^{\ell
- 1 - s} \xi_{f}^{j} \right) \right) &&\text{(by
\autoref{Lemma:BinomialCongruencesReduceTop})}  \\
&= \ind\left( 1 - \xi_{f}^{j} \right) - \ind\left( 1 - \xi_{\ell}^{i}
\xi_{f}^{j} \right) &&\text{(telescoping sum)}\\
&= \ind \eta_{0, m} - \ind\left( 1 - \xi_{\ell}^{i} \xi_{f}^{j} \right)
&&\text{(by \autoref{Lemma:LastIndCongruencesEta0j}).}
\end{align*}

\item \label{LemmaProof:LastIndCongruencesXiXi}
Taking $\ind$ of both sides of $1 - \xi_{\ell}^{i} \xi_f^{j} = -
\xi_{\ell}^{i} \xi_f^{j} \left( 1 - \xi_{\ell}^{\ell - i} \xi_f^{f - j} \right)$
and using \autoref{Lemma:IndyzAdditive} gives 
\begin{align*}
\ind\left( 1 - \xi_{\ell}^{i} \xi_f^{j} \right) &\equiv \ind(-1) + i \ind
\xi_{\ell} + j \ind \xi_{f} + \ind \left( 1 - \xi_{\ell}^{\ell - i} \xi_f^{f -
j} \right) \pmod{\ell} \\
&\equiv \ind(-1) + i \left( \frac{q - 1}{\ell f} \right) + \ind \left( 1 -
\xi_{\ell}^{\ell - i} \xi_f^{f - j} \right) \pmod{\ell}
\end{align*}
by \eqref{Equation:IndXiell} and \eqref{Equation:IndXif}. \qedhere
\end{enumerate}
\end{proof}

\section{Some rings}
\begin{definition}
Define $Q \colonequals \mathbf{Z}[t] / (t^f - 1)$. Define ring homomorphisms
$\alpha \colon Q \to \mathcal{O}_{M}$ and $\beta \colon Q \to \mathbf{Z}$ by
$\alpha(t) = \zeta_f$ and $\beta(t) = 1$. Define
\begin{align*}
&R &&\colonequals Q / \ell Q = \mathbf{Z}[t]/(\ell, t^f - 1) \\
&R' &&\colonequals \text{the subring }\mathbf{Z} / \ell \mathbf{Z} \text{ of } R
\\
&\omega \colon R \to \mathcal{O}_{M} / \ell \mathcal{O}_{M} &&\colonequals
\text{the ring homomorphism induced by }\alpha\text{; i.e., }\omega(t) =
[\zeta_f] \\
&\tau \colon R \to \mathbf{Z} / \ell \mathbf{Z} &&\colonequals
\text{the ring homomorphism induced by }\beta\text{; i.e., }\tau(t) = 1.
\end{align*}
Each $r \in R$ has a unique representation $r = a_0 + a_1 t + \dots + a_{f - 1}
t^{f - 1}$ for $a_0, a_1, \dots, a_{f - 1} \in \mathbf{Z} / \ell \mathbf{Z}$, so
for $j \in [0, f - 1]$, define
\[
[t^{j}] \left( r \right) \colonequals a_{j}
\]
to be the $j$th coefficient of $r$.
\end{definition}

\begin{lemma}
\label{ChineseRemainderTheorem}
The product homomorphism 
\[
(\omega, \tau) : R \to (\mathcal{O}_{M} / \ell \mathcal{O}_{M}) \times
(\mathbf{Z} / \ell \mathbf{Z})
\]
is an isomorphism.
\end{lemma}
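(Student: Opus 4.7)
The plan is to reduce the statement to the Chinese Remainder Theorem applied to the factorization of $t^f - 1$ over $\mathbf{F}_\ell$. First I would observe that $R = \mathbf{F}_\ell[t]/(t^f - 1)$ and factor $t^f - 1 = (t - 1) \Phi_f(t)$ in $\mathbf{Z}[t]$, where $\Phi_f(t) = 1 + t + \cdots + t^{f - 1}$; this factorization is available because $f$ is prime. The key coprimality check is that $(t - 1)$ and $\Phi_f(t)$ are coprime in $\mathbf{F}_\ell[t]$: since $\Phi_f(1) = f$ and $\ell \neq f$, the value $f$ is a unit in $\mathbf{F}_\ell$, so $\Phi_f(t)$ is not divisible by $t - 1$ modulo $\ell$.

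Next I would invoke CRT to obtain a ring isomorphism
\[
\mathbf{F}_\ell[t]/(t^f - 1) \xrightarrow{\;\sim\;} \mathbf{F}_\ell[t]/(t - 1) \times \mathbf{F}_\ell[t]/(\Phi_f(t)).
\]
The first factor is canonically $\mathbf{Z}/\ell\mathbf{Z}$ via $t \mapsto 1$, matching $\tau$. For the second factor, I would use that $\mathcal{O}_M = \mathbf{Z}[\zeta_f] \cong \mathbf{Z}[t]/\Phi_f(t)$ (since $\Phi_f$ is the minimal polynomial of $\zeta_f$), so reducing modulo $\ell$ gives $\mathcal{O}_M/\ell\mathcal{O}_M \cong \mathbf{F}_\ell[t]/\Phi_f(t)$ via $t \mapsto [\zeta_f]$, matching $\omega$. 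Assembling these identifications shows that the product map in the CRT isomorphism is exactly $(\omega, \tau)$.

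An alternative, essentially equivalent, route would be to check injectivity directly and then use a dimension count: an element $r(t) \in \mathbf{F}_\ell[t]$ of degree less than $f$ satisfies $\omega(r) = 0$ iff $\Phi_f(t) \mid r(t)$ and $\tau(r) = 0$ iff $(t - 1) \mid r(t)$, so membership in $\ker(\omega, \tau)$ forces $(t^f - 1) \mid r(t)$, hence $r = 0$ in $R$; since both sides have dimension $f$ over $\mathbf{F}_\ell$, injectivity yields the isomorphism. I expect no real obstacle here; the only thing to guard against is forgetting to use the hypotheses that $f$ is prime and $\ell \neq f$, both of which are needed precisely to make the two factors $(t-1)$ and $\Phi_f(t)$ coprime in $\mathbf{F}_\ell[t]$.
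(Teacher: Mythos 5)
Your proposal is correct and follows essentially the same route as the paper: both apply the Chinese Remainder Theorem to the factorization $t^f - 1 = (t-1)(1 + t + \cdots + t^{f-1})$ over $\mathbf{Z}/\ell\mathbf{Z}$, with comaximality of the two factors coming from the fact that $f = \Phi_f(1)$ is a unit modulo $\ell$ (the paper exhibits $f$ explicitly as a difference of elements of the two ideals, while you evaluate at $t=1$; these are the same observation). The identifications of the two quotients with $\mathbf{Z}/\ell\mathbf{Z}$ and $\mathcal{O}_M/\ell\mathcal{O}_M$ match the paper's as well.
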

\begin{proof}
The ideals $I_1$, $I_2$ of $R$ defined by
\begin{align*}
I_1 &\colonequals (t^{f - 1} + t^{f - 2} + \dots + 1) \\
I_2 &\colonequals (t - 1)
\end{align*}
are pairwise coprime because for
\begin{align*}
&i_1 \colonequals t^{f - 1} + t^{f - 2} + \dots + 1 &&\in I_1 \\
&i_2 \colonequals (t^{f - 1} - 1) + (t^{f - 2} - 1) + \dots + (t - 1) &&\in
I_{2},
\end{align*}
the difference $i_1 - i_2 = f$ is a unit of $R$, so by the Chinese remainder
theorem, the natural map
\[
R \to (R / I_{1}) \times (R / I_{2})
\]
is an isomorphism.  Since $\omega$ is the composite map $\omega \colon R \to R /
I_{1} \simeq \mathcal{O}_{M} / \ell \mathcal{O}_{M}$ and $\tau$ is the composite
map $\tau \colon R \to R / I_{2} \simeq \mathbf{Z} / \ell \mathbf{Z}$, we are
done.
\end{proof}

\begin{lemma}
\label{Lemma:GeneralPropertyInsideR}
For $r \in \ker \tau$, the following are equivalent.
\begin{enumerate}[label=\upshape(\arabic*),
ref=\autoref{Lemma:GeneralPropertyInsideR}(\arabic*)]

\item \label{Lemma:GeneralPropertyInsideRomegar0}
$\omega(r) = 0$;

\item \label{Lemma:GeneralPropertyInsideRr0}
$r = 0$;

\item \label{Lemma:GeneralPropertyInsideRrR'}
$r \in R'$.

\end{enumerate}

\end{lemma}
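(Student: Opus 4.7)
The plan is to deduce all three equivalences from \autoref{ChineseRemainderTheorem} together with the elementary observation that $\tau$ restricts to the identity on $R'$. I would prove the cycle (2)~$\Rightarrow$~(3)~$\Rightarrow$~(1)~$\Rightarrow$~(2).

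The implication (2)~$\Rightarrow$~(3) is immediate since $0 \in R'$. For (3)~$\Rightarrow$~(1), note that every $r \in R'$ is a constant $c \in \mathbf{Z}/\ell\mathbf{Z}$, and the hypothesis $r \in \ker\tau$ combined with $\tau(c) = c$ forces $c = 0$; then $\omega(r) = \omega(0) = 0$. For (1)~$\Rightarrow$~(2), I would use that $r \in \ker\tau$ gives $\tau(r) = 0$, and (1) gives $\omega(r) = 0$, so the pair $(\omega(r), \tau(r))$ vanishes in $(\mathcal{O}_M/\ell\mathcal{O}_M) \times (\mathbf{Z}/\ell\mathbf{Z})$; by the isomorphism in \autoref{ChineseRemainderTheorem}, $r = 0$.

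There is no real obstacle here; the lemma is a direct corollary of the CRT decomposition already established. The only thing one must verify explicitly is that $\tau|_{R'}$ is the identity, which is clear from the definitions: $R' = \mathbf{Z}/\ell\mathbf{Z} \subset R$ consists of constant polynomials, and $\tau$ is induced by $\beta(t) = 1$, so $\tau$ sends each constant to itself.
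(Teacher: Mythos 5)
Your proposal is correct and uses exactly the two ingredients of the paper's own proof: the injectivity of $\tau|_{R'}$ (to kill constants in $\ker\tau$) and the isomorphism of \autoref{ChineseRemainderTheorem} (to conclude $r=0$ from $\omega(r)=\tau(r)=0$); the only cosmetic difference is that you arrange the implications in a cycle while the paper proves the two biconditionals (2)$\Leftrightarrow$(3) and (1)$\Leftrightarrow$(2) directly. No gaps.
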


\begin{proof}
The restriction $\tau|_{R'} : R' \to \mathbf{Z} / \ell \mathbf{Z}$ is an
isomorphism, so $r \in R' \cap \ker \tau$ if and only if $r = 0$, giving
\autoref{Lemma:GeneralPropertyInsideRrR'} $\Longleftrightarrow$
\autoref{Lemma:GeneralPropertyInsideRr0}. By \autoref{ChineseRemainderTheorem},
$r = 0$ if and only if $\tau(r) = 0$ and $\omega(r) = 0$, giving
\autoref{Lemma:GeneralPropertyInsideRomegar0} $\Longleftrightarrow$
\autoref{Lemma:GeneralPropertyInsideRr0}.
\end{proof}

\begin{definition}
For nonnegative integers $u$ and $v$, define
\begin{align*}
&S(u, v) &&\colonequals \sum_{x \in \mathbf{F}_{q} \setminus \{ 0, 1 \}}
\binom{\ind x}{u} \binom{\ind (1 - x)}{v} t^{\ind x} &&\in R \\
&T(u, v) &&\colonequals \tau(S(u, v)) &&\in \mathbf{Z} / \ell \mathbf{Z} \\
&W(u, v) &&\colonequals \omega(S(u, v)) &&\in \mathcal{O}_{M} / \ell
\mathcal{O}_{M}.
\end{align*}
\end{definition}

\begin{lemma}
\label{T0iDivisibleByEll}
For $i \in [0, \ell - 1]$,
\[
T(0, i) = \begin{cases}
-1 & \text{if } i = 0 \\
0 &\text{if } i \in [1, \ell - 2] \\
\frac{q - 1}{\ell} &\text{if } i = \ell - 1.
\end{cases}
\]
\end{lemma}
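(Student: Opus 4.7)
The plan is to apply $\tau$ to the definition of $S(0, i)$, simplify using a change of variable, and reduce to a single binomial coefficient that can be evaluated via the congruences in \autoref{Lemma:BinomialCongruences}.

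First I would unpack the definition: since $\tau(t) = 1$ and $\binom{\ind x}{0} = 1$, we have
\[
T(0, i) = \sum_{x \in \mathbf{F}_{q} \setminus \{0, 1\}} \binom{\ind(1 - x)}{i} \pmod{\ell}.
\]
The substitution $y = 1 - x$ is a bijection from $\mathbf{F}_{q} \setminus \{0, 1\}$ to itself, so by \autoref{Lemma:IndRange},
\[
T(0, i) \equiv \sum_{y \in \mathbf{F}_{q} \setminus \{0, 1\}} \binom{\ind y}{i} = \sum_{a = 1}^{q - 2} \binom{a}{i} \pmod{\ell}.
\]
Extending the sum to start from $a = 0$ contributes an extra $\binom{0}{i} = \delta_{i, 0}$, so by the hockey stick identity \autoref{Lemma:BinomialCongruencesHockeystick} applied with upper bound $q - 1$,
\[
T(0, i) \equiv \binom{q - 1}{i + 1} - \delta_{i, 0} \pmod{\ell}.
\]

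Next I would split on $i$ using the hypothesis $q \equiv 1 \pmod{\ell}$, which gives $q - 1 \equiv 0 \pmod{\ell}$. For $i \in [0, \ell - 2]$, we have $i + 1 \in [1, \ell - 1]$, so $i + 1 \not\equiv 0 \pmod{\ell}$, and \autoref{Lemma:BinomialCongruences0} yields $\binom{q - 1}{i + 1} \equiv 0 \pmod{\ell}$. This handles $i = 0$ (giving $T(0, 0) \equiv -1$) and $i \in [1, \ell - 2]$ (giving $T(0, i) \equiv 0$). For $i = \ell - 1$, we apply \autoref{Lemma:BinomialCongruencesReduce} with $a = (q - 1)/\ell$ and $b = 1$ to obtain
\[
\binom{q - 1}{\ell} = \binom{\frac{q - 1}{\ell} \cdot \ell}{1 \cdot \ell} \equiv \binom{\frac{q - 1}{\ell}}{1} = \frac{q - 1}{\ell} \pmod{\ell},
\]
and since $\delta_{\ell - 1, 0} = 0$ (as $\ell \ge 2$), this gives $T(0, \ell - 1) \equiv (q - 1)/\ell \pmod{\ell}$, completing the three cases.

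There is no real obstacle; the only care needed is in correctly isolating the $a = 0$ term and in recognizing that the $i = \ell - 1$ case is precisely where the divisibility $\ell \mid i + 1$ first kicks in, forcing us to use the stronger reduction \autoref{Lemma:BinomialCongruencesReduce} rather than the vanishing lemma \autoref{Lemma:BinomialCongruences0}.
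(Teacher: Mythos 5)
Your proof is correct and follows essentially the same route as the paper's: apply $\tau$, reindex the sum over $\ind(1-x)$ via \autoref{Lemma:IndRange}, evaluate with the hockey stick identity to get $\binom{q-1}{i+1} - \binom{0}{i}$, and finish with \autoref{Lemma:BinomialCongruences0} and \autoref{Lemma:BinomialCongruencesReduce}. You merely spell out the final case analysis that the paper leaves implicit.
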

\begin{proof}
We have
\begin{align*}
T(0, i) &= \tau(S(0, i)) \nonumber \\
&= \tau \left( \sum_{x \in \mathbf{F}_{q} \setminus \{ 0, 1 \}} \binom{\ind (1 -
x)}{i} t^{\ind x} \right) \nonumber \\
&= \sum_{x \in \mathbf{F}_{q} \setminus \{ 0, 1 \}} \binom{\ind (1 - x)}{i}
\nonumber\\
&= \sum_{k = 1}^{q - 2} \binom{k}{i} &&\text{(by \autoref{Lemma:IndRange})} \\
&= \binom{q - 1}{i + 1} - \binom{0}{i} &&\text{(by
\autoref{Lemma:BinomialCongruencesHockeystick}),}
\end{align*}
and the rest follows from \autoref{Lemma:BinomialCongruences0} and
\autoref{Lemma:BinomialCongruencesReduce}.
\end{proof}

\begin{lemma} \hfill
\label{Lemma:W0i0IffS0iR'}

\begin{enumerate}[label=\upshape{(\Alph*)},
ref={\autoref{Lemma:W0i0IffS0iR'}(\Alph*)}]

\item 
\label{Lemma:W0i0IffS0iR'UpToEllM2}
For $i \in [1, \ell - 2]$, the following are equivalent:

\begin{enumerate}[label=\upshape{(\arabic*)},
ref={\theenumi(\arabic*)}]

\item 
\label{Lemma:W0i0IffS0iR'UpToEllM2InR'}
$S(0, i) \in R'$;

\item 
\label{Lemma:W0i0IffS0iR'UpToEllM2W0i0}
$W(0, i) = 0$.

\end{enumerate}

\item
\label{Lemma:W0i0IffS0iR'EllM1}
The following are equivalent:

\begin{enumerate}[label=\upshape{(\arabic*)},
ref={\theenumi(\arabic*)}]

\item 
\label{Lemma:W0i0IffS0iR'EllM1InR'}
$S(0, \ell - 1) - \frac{q - 1}{\ell f} (1 + t + t^2 + \cdots + t^{f - 1}) \in
R'$;

\item
\label{Lemma:W0i0IffS0iR'EllM1W0i0}
$W(0, \ell - 1) = 0$.

\end{enumerate}
\end{enumerate}
\end{lemma}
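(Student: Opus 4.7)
The plan is to reduce both parts of the lemma to \autoref{Lemma:GeneralPropertyInsideR}, which gives three equivalent conditions for elements of $\ker \tau$. In each case I would identify an element $r \in R$ to feed into that lemma, arranging that $r \in \ker \tau$ while $\omega(r)$ captures $W(0,i)$ and $r$ lies in $R'$ exactly when the element in the statement does.

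For \autoref{Lemma:W0i0IffS0iR'UpToEllM2}, I would simply take $r \colonequals S(0, i)$. By \autoref{T0iDivisibleByEll}, $\tau(r) = T(0, i) = 0$ for $i \in [1, \ell - 2]$, so $r \in \ker \tau$. The equivalence of \autoref{Lemma:GeneralPropertyInsideRrR'} and \autoref{Lemma:GeneralPropertyInsideRomegar0} then gives exactly $S(0, i) \in R' \Longleftrightarrow W(0, i) = 0$, as desired.

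For \autoref{Lemma:W0i0IffS0iR'EllM1}, $S(0, \ell - 1)$ no longer lies in $\ker \tau$ since \autoref{T0iDivisibleByEll} gives $T(0, \ell - 1) = (q-1)/\ell$, which need not vanish modulo $\ell$. I would instead take
\[
r \colonequals S(0, \ell - 1) - \frac{q - 1}{\ell f}\bigl(1 + t + t^{2} + \cdots + t^{f - 1}\bigr).
\]
Then $\tau(r) = \frac{q-1}{\ell} - \frac{q-1}{\ell f} \cdot f = 0$, so $r \in \ker \tau$. On the other hand, since $\zeta_f$ is a primitive $f$th root of unity with $f > 1$, the identity $1 + \zeta_f + \cdots + \zeta_f^{f-1} = 0$ in $\mathcal{O}_M$ yields $\omega(1 + t + \cdots + t^{f-1}) = 0$, and hence $\omega(r) = W(0, \ell - 1)$. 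Applying \autoref{Lemma:GeneralPropertyInsideR} to this $r$ concludes the proof.

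The only (very mild) obstacle is spotting the correction term in part (B): one wants something killed by $\omega$ but whose image under $\tau$ is $f$, and $1 + t + \cdots + t^{f-1}$ is exactly the generator $i_1$ of $I_1 = \ker \omega$ from the CRT proof of \autoref{ChineseRemainderTheorem}, which is why the scaling by $(q-1)/(\ell f)$ exactly cancels $T(0, \ell - 1)$ under $\tau$.
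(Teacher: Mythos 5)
Your proposal is correct and follows the same route as the paper: both apply \autoref{Lemma:GeneralPropertyInsideR} to $r = S(0,i)$ and to $r = S(0,\ell-1) - \frac{q-1}{\ell f}(1 + t + \cdots + t^{f-1})$, using \autoref{T0iDivisibleByEll} to verify $r \in \ker\tau$. Your explicit check that $\omega$ kills the correction term (so that $\omega(r) = W(0,\ell-1)$) is a detail the paper leaves implicit, and it is a welcome addition.
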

\begin{proof}
\autoref{T0iDivisibleByEll} implies that 
\begin{align*}
S(0, i) &\in \ker \tau \\
S(0, \ell - 1) - \frac{q - 1}{\ell f} (1 + t + t^2 + \cdots + t^{f - 1}) &\in
\ker \tau,
\end{align*}
so we are done by applying \autoref{Lemma:GeneralPropertyInsideRomegar0}
$\Longleftrightarrow$ \autoref{Lemma:GeneralPropertyInsideRrR'} to $r = S(0, i)$
and to $r = S(0, \ell - 1) - \frac{q - 1}{\ell f} (1 + t + t^2 + \cdots + t^{f -
1})$.
\end{proof}

\section{\texorpdfstring{$\ell$}{ell}-adic valuation of Jacobi sums}
\begin{definition}
For integers $a, b \not\equiv 0 \pmod{\ell f}$, define
\[
J(a, b) \colonequals \sum_{x \in \mathbf{F}_{q} \setminus \{ 0, 1 \}} \zeta_{f
\ell}^{a \ind(x) + b \ind (1 - x)}.
\]
\end{definition}
\begin{lemma} \hfill
\label{Lemma:JellfValuationS0i}

\begin{enumerate}[label=\upshape{(\Alph*)},
ref={\autoref{Lemma:JellfValuationS0i}(\Alph*)}]

\item 
\label{Lemma:JellfValuationS0iUpToEllM1}
For $k \in [1, \ell - 1]$, the following are equivalent:

\begin{enumerate}[label=\upshape{(\arabic*)},
ref={\theenumi(\arabic*)}]

\item 
\label{Lemma:JellfValuationS0iUpToEllM1Congruence}

$J(\ell, f) + 1 \in \pi_{\ell}^{k} \mathcal{O}_{L}$;

\item
\label{Lemma:JellfValuationS0iUpToEllM1InR'}
$S(0, 1),\; S(0, 2),\; \dots,\; S(0, k - 1) \in R'$.

\end{enumerate}

\item 
\label{Lemma:JellfValuationS0iEll}
The following are equivalent:

\begin{enumerate}[label=\upshape{(\arabic*)},
ref={\theenumi(\arabic*)}]

\item 
\label{Lemma:JellfValuationS0iEllCongruence}
$J(\ell, f) + 1 \in \pi_{\ell}^{\ell} \mathcal{O}_{L}$;

\item
\label{Lemma:JellfValuationS0iEllInR'}
$S(0, 1),\; S(0, 2),\; \dots,\; S(0, \ell - 2),\; S(0, \ell - 1) - \frac{q -
1}{\ell f} \left( 1 + t + \cdots + t^{f - 1} \right) \in R'$.

\end{enumerate}
\end{enumerate}
\end{lemma}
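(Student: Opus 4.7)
The plan is to expand $J(\ell, f)$ as a polynomial in $\pi_\ell$ with $\mathcal{O}_M$-coefficients whose reductions modulo $\ell$ are the $W(0, i)$, and then to track $\pi_\ell$-adic valuations. Using $\zeta_{\ell f}^{\ell} = \zeta_f$ and $\zeta_{\ell f}^{f} = \zeta_\ell = 1 + \pi_\ell$, I would write
\[
J(\ell, f) = \sum_{x \in \mathbf{F}_q \setminus \{0, 1\}} \zeta_f^{\ind(x)} (1 + \pi_\ell)^{\ind(1 - x)} = \sum_{i \geq 0} \pi_\ell^i \sigma_i,
\]
where $\sigma_i \colonequals \sum_x \binom{\ind(1 - x)}{i} \zeta_f^{\ind(x)} \in \mathcal{O}_M$ is a natural lift of $W(0, i) = \omega(S(0, i))$. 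A short computation using $\sum_{a = 0}^{q - 2} \zeta_f^a = 0$ (since $f \mid q - 1$) gives $\sigma_0 = -1$, so $J(\ell, f) + 1 = \sum_{i \geq 1} \pi_\ell^i \sigma_i$, and in particular $J(\ell, f) + 1 \in \pi_\ell \mathcal{O}_L$.

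The essential ramification input: since $\ell$ is unramified in $M$ (as $\gcd(\ell, f) = 1$) and totally ramified in $L/M$ with $(\pi_\ell) \mathcal{O}_L = \prod_{\mathfrak{P} \mid \ell} \mathfrak{P}$, one has $\pi_\ell \mathcal{O}_L \cap \mathcal{O}_M = \ell \mathcal{O}_M$ and $\ell \mathcal{O}_M \subseteq \pi_\ell^{\ell - 1} \mathcal{O}_L$. Thus whenever $\sigma_i \in \ell \mathcal{O}_M$, the term $\pi_\ell^i \sigma_i$ lies in $\pi_\ell^{i + \ell - 1} \mathcal{O}_L$; and for $\sigma_i \in \mathcal{O}_M$, membership $\sigma_i \in \ell \mathcal{O}_M$ is equivalent to $W(0, i) = 0$.

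For part (A) I would induct on $k$, with $k = 1$ being the vacuous base case established above. Assume the result through $k - 1$, and suppose we are in the inductive case with $2 \leq k \leq \ell - 1$. If $S(0, 1), \ldots, S(0, k - 2) \in R'$, then Lemma \ref{Lemma:W0i0IffS0iR'UpToEllM2} and the above give $\sigma_i \in \ell \mathcal{O}_M$ for $i \in [1, k - 2]$, so $\pi_\ell^i \sigma_i \in \pi_\ell^{i + \ell - 1} \mathcal{O}_L \subseteq \pi_\ell^k \mathcal{O}_L$ for these $i$, and $\pi_\ell^i \sigma_i \in \pi_\ell^k \mathcal{O}_L$ for $i \geq k$ trivially. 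Hence $J(\ell, f) + 1 \equiv \pi_\ell^{k - 1} \sigma_{k - 1} \pmod{\pi_\ell^k \mathcal{O}_L}$, and this vanishes iff $\sigma_{k - 1} \in \pi_\ell \mathcal{O}_L \cap \mathcal{O}_M = \ell \mathcal{O}_M$, iff $S(0, k - 1) \in R'$ by Lemma \ref{Lemma:W0i0IffS0iR'UpToEllM2}. The converse direction uses the inductive hypothesis: $J(\ell, f) + 1 \in \pi_\ell^k \mathcal{O}_L \subseteq \pi_\ell^{k - 1} \mathcal{O}_L$ forces $S(0, 1), \ldots, S(0, k - 2) \in R'$, and the same congruence then yields $S(0, k - 1) \in R'$.

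For part (B) the same analysis runs through $i = \ell - 2$, reducing the equivalence to whether $\pi_\ell^{\ell - 1} \sigma_{\ell - 1} \in \pi_\ell^\ell \mathcal{O}_L$, i.e., $\sigma_{\ell - 1} \in \ell \mathcal{O}_M$. The new wrinkle is that $T(0, \ell - 1) = (q - 1)/\ell$ need not vanish modulo $\ell$, so $S(0, \ell - 1) \notin \ker \tau$ in general; however, subtracting $\frac{q - 1}{\ell f}(1 + t + \cdots + t^{f - 1})$ lands in $\ker \tau$ without changing the image under $\omega$, since $\omega(1 + t + \cdots + t^{f - 1}) = \sum_{j = 0}^{f - 1} \zeta_f^j = 0$. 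Lemma \ref{Lemma:W0i0IffS0iR'EllM1} then identifies the corrected membership in $R'$ with $W(0, \ell - 1) = 0$, giving the equivalence. The main obstacle I anticipate is the careful verification of the ramification identity $\pi_\ell \mathcal{O}_L \cap \mathcal{O}_M = \ell \mathcal{O}_M$ together with $\ell \mathcal{O}_M \subseteq \pi_\ell^{\ell - 1} \mathcal{O}_L$, since these are what force the individual terms $\pi_\ell^i \sigma_i$ to be absorbed into $\pi_\ell^k \mathcal{O}_L$ once $\sigma_i$ reduces to zero mod $\ell$.
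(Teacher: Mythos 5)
Your proposal is correct and follows essentially the same route as the paper: the identical expansion of $J(\ell, f)$ in powers of $\pi_{\ell}$ with coefficients $\sigma_{i} \in \mathcal{O}_{M}$ lifting $W(0, i)$, the computation $\sigma_{0} = -1$, and the reduction of each condition to $W(0, i) = 0$ via \autoref{Lemma:W0i0IffS0iR'}. The only difference is organizational: you package the valuation bookkeeping as an induction on $k$ using $\pi_{\ell} \mathcal{O}_{L} \cap \mathcal{O}_{M} = \ell \mathcal{O}_{M}$ and $\ell \mathcal{O}_{M} \subseteq \pi_{\ell}^{\ell - 1} \mathcal{O}_{L}$, whereas the paper observes directly that the terms $\pi_{\ell}^{i} \sigma_{i}$ for $i \in [1, \ell - 1]$ have pairwise distinct $\ell$-adic valuations modulo $1$, so that membership of the sum in $\pi_{\ell}^{k} \mathcal{O}_{L}$ is decided termwise.
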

\begin{proof}
By definition,
\begin{align*}
&J(\ell, f) \\
&= \sum_{x \in \mathbf{F}_{q} \setminus \{ 0, 1 \}} \zeta_{f
\ell}^{\ell \ind(x) + f \ind(1 - x)} \\
&= \sum_{x \in \mathbf{F}_{q} \setminus \{ 0, 1 \}} \zeta_{f}^{\ind (x)}
\zeta_{\ell}^{\ind(1 - x)}\\ 
&= \sum_{x \in \mathbf{F}_{q} \setminus \{ 0, 1 \}} \zeta_{f}^{\ind (x)}
(1 + \pi_{\ell})^{\ind(1 - x)}\\
&= \sum_{x \in \mathbf{F}_{q} \setminus \{ 0, 1 \}} \zeta_{f}^{\ind (x)} \sum_{i
= 0}^{\ind(1 - x)} \binom{\ind(1 - x)}{i}  \pi_{\ell}^{i}   \\
&= \sum_{x \in \mathbf{F}_{q} \setminus \{ 0, 1 \}} \zeta_{f}^{\ind (x)}
\sum_{i = 0}^{q - 1}  \binom{\ind(1 - x)}{i}  \pi_{\ell}^{i}  &\text{(since
}\ind(1 - x) < q - 1\text{)} \\
&= \sum_{i = 0}^{q - 1} \pi_{\ell}^i \left(\sum_{x \in \mathbf{F}_{q} \setminus
\{ 0, 1 \}} \binom{\ind(1 - x)}{i}\zeta_{f}^{\ind(x)}\right) \\
&\in \left(\sum_{i = 0}^{\ell - 1} \pi_{\ell}^i \left(\sum_{x \in \mathbf{F}_{q}
\setminus \{ 0, 1 \}} \binom{\ind(1 - x)}{i}\zeta_{f}^{\ind(x)}\right)\right) +
\pi_{\ell}^{\ell} \mathcal{O}_{L}
\end{align*}
By \autoref{Lemma:IndRange}, the $i = 0$ term contributes $\zeta_{f}^1 + \dots
+ \zeta_{f}^{q - 2} = (\zeta_{f}^{q - 1} - \zeta_{f}) / (\zeta_{f} - 1) = -1$
since $q \equiv 1 \pmod{\ell f}$, so
\begin{equation}
\label{Equation:ExpandJellf}
J(\ell, f) \in \left(-1 + \sum_{i = 1}^{\ell - 1} \pi_{\ell}^i \left(\sum_{x \in
\mathbf{F}_{q} \setminus \{ 0, 1 \}} \binom{\ind(1 -
x)}{i}\zeta_{f}^{\ind(x)}\right)\right) + \pi_{\ell}^{\ell} \mathcal{O}_{L}
\end{equation}
Since $v_{\ell}(\pi_{\ell}) = \frac{1}{\ell - 1}$, the term $\left(\sum_{x \in
\mathbf{F}_{q} \setminus \{ 0, 1 \}} \binom{\ind(1 -
x)}{i}\zeta_{f}^{\ind(x)}\right)$ lies in $\mathcal{O}_M$, and $M$ is unramified
at $\ell$, the $i$th term in the sum on the right hand side of
\eqref{Equation:ExpandJellf} has $\ell$-adic valuation $\frac{i}{\ell - 1} \pmod
1$. In particular, all the valuations are distinct, so 
\[
J(\ell, f) + 1 \in \pi_{\ell}^{k} \mathcal{O}_{L}
\]
if and only if
\[
\sum_{x \in \mathbf{F}_{q} \setminus \{ 0, 1 \}} \binom{\ind(1 -
x)}{i}\zeta_{f}^{\ind(x)} \in \ell \mathcal{O}_{M} \quad\text{ for }i \in [1, k
- 1],
\]
which is the same as
\[
W(0, 1),\; W(0, 2),\; \cdots,\; W(0, k - 1) = 0,
\]
so we are done by \autoref{Lemma:W0i0IffS0iR'}.
\end{proof}

\section{The connection between \texorpdfstring{$S(i, 1)$}{S(i, 1)} and
cyclotomic units}

Recall that $g$ is a generator for $\mathbf{F}_{q}^{\times}$. We abuse notation
and define $\zeta_{\ell f} \colonequals g^{\frac{q - 1}{\ell f}} \in
\mathbf{F}_{q}^\times$. Using $\zeta_{\ell f}$, define $\zeta_f, \zeta_{\ell},
\xi_f, \xi_{\ell}$ as before.
\begin{lemma}
\label{Si1AndCyclotomicUnitsComputation}
For $i \in [0, \ell - 1]$ and $j \in [1, f - 1]$, 
\[
[t^j] S(i, 1) \equiv \ind \eta_{i, j} \pmod{\ell}.
\]
\end{lemma}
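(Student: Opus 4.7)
The plan is to unpack both sides and reduce to a term-by-term comparison via the lemmas of Section 3.

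First I would extract $[t^j] S(i, 1)$ from the definition. Since the relation $t^f = 1$ identifies $t^{\ind x}$ with $t^{\ind x \bmod f}$, and since $\binom{\ind(1-x)}{1} = \ind(1-x)$, substituting $a = \ind x$ and using \autoref{Lemma:IndRange} gives
\[
[t^j] S(i, 1) \equiv \sum_{\substack{a \in [1, q-2]\\ a \equiv j \pmod{f}}} \binom{a}{i} \ind(1 - g^{a}) \pmod{\ell}.
\]

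Next I would expand the right-hand side. Applying \autoref{Lemma:LastIndCongruencesEtaXi} unpacks $\ind \eta_{i,j}$ as
\[
\ind \eta_{i,j} \equiv \sum_{r=0}^{\ell-1} \binom{r}{i} \ind\!\left(1 - \xi_\ell^{r} \xi_f^{j}\right) \pmod{q-1},
\]
and then \autoref{Lemma:CyclotomicProduct} replaces each factor $\ind(1 - \xi_\ell^r \xi_f^j)$ by the inner sum $\sum_{a \equiv j \,(f),\, a \equiv r\,(\ell)} \ind(1 - g^a)$. Combining these gives
\[
\ind \eta_{i,j} \equiv \sum_{r=0}^{\ell-1} \sum_{\substack{a \in [1, q-2]\\ a \equiv j \pmod{f}\\ a \equiv r \pmod{\ell}}} \binom{r}{i} \ind(1 - g^{a}) \pmod{q-1}.
\]

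To finish I would reduce modulo $\ell$ (legal since $\ell \mid q-1$) and merge the double sum into a single sum over $a$: for each such $a$, the unique $r \in [0,\ell-1]$ with $a \equiv r \pmod{\ell}$ gives, by \autoref{Lemma:BinomialCongruencesReduceTop}, the congruence $\binom{r}{i} \equiv \binom{a}{i} \pmod{\ell}$. This yields exactly the expression computed for $[t^j] S(i,1)$ above.

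The argument is essentially bookkeeping; no step is especially hard, but the one to be careful about is the indexing collapse, making sure that pulling the sum over $r$ inside and replacing $\binom{r}{i}$ by $\binom{a}{i}$ is justified modulo $\ell$ (which is where \autoref{Lemma:BinomialCongruencesReduceTop} and the hypothesis $q \equiv 1 \pmod{\ell f}$ enter).
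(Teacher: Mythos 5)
Your proposal is correct and matches the paper's proof essentially exactly: both decompose the coefficient sum over $a \equiv j \pmod{f}$ into residue classes $r$ modulo $\ell$, apply \autoref{Lemma:BinomialCongruencesReduceTop} to replace $\binom{a}{i}$ by $\binom{r}{i}$, and then invoke \autoref{Lemma:CyclotomicProduct} and \autoref{Lemma:LastIndCongruencesEtaXi}. The only difference is cosmetic — you work from both ends toward the middle rather than in one direction.
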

\begin{proof}
By definition of $S(i, 1)$,
\begin{align*}
[t^{j}] S(i, 1) &= \sum_{\substack{a \in [1, q - 2] \\ a \equiv j \pmod{f}}}
\binom{a}{i} \ind(1 - g^{a}) \\
&= \sum_{r = 0}^{\ell - 1} \sum_{\substack{a \in [1, q - 2] \\ a \equiv j \pmod
{f} \\ a \equiv r \pmod{\ell}}} \binom{a}{i} \ind(1 - g^{a}) \\
&\equiv \sum_{r = 0}^{\ell - 1} \binom{r}{i} \sum_{\substack{a \in [1, q - 2] \\
a \equiv j \pmod{f} \\ a \equiv r \pmod{\ell}}}  \ind(1 - g^{a}) \pmod{\ell}
&&\text{(by \autoref{Lemma:BinomialCongruencesReduceTop})} \\
&\equiv \sum_{r = 0}^{\ell - 1} \binom{r}{i} \ind \left( 1 - \xi_{\ell}^{r}
\xi_{f}^{j} \right) \pmod{\ell} &&\text{(by \autoref{Lemma:CyclotomicProduct})}
\\
&\equiv \ind \eta_{i, j} \pmod{\ell} &&\text{(by
\autoref{Lemma:LastIndCongruencesEtaXi}).} \qedhere
\end{align*}
\end{proof}
\begin{lemma} \hfill
\label{Lemma:Si1AndCyclotomicUnits}
\begin{enumerate}[label=\upshape{(\Alph*)},
ref={\autoref{Lemma:Si1AndCyclotomicUnits}(\Alph*)}]

\item 
\label{Lemma:Si1AndCyclotomicUnitsUpToEllM3}

For $i \in [0, \ell - 3]$, the following are equivalent:

\begin{enumerate}[label=\upshape(\arabic*),
ref={\theenumi(\arabic*)}]

\item 
\label{Lemma:Si1AndCyclotomicUnitsUpToEllM3InR'}
$S(i, 1) \in R'$;

\item
\label{Lemma:Si1AndCyclotomicUnitsUpToEllM3Congruence}
$\ind \eta_{i, j} \equiv 0 \pmod{\ell}$ for $j \in [1, f - 1]$.

\end{enumerate}

\item 
\label{Lemma:Si1AndCyclotomicUnitEllM2}

The following are equivalent:

\begin{enumerate}[label=\upshape(\arabic*),
ref={\theenumi(\arabic*)}]

\item 
\label{Lemma:Si1AndCyclotomicUnitsEllM2InR'}
$S(\ell - 2, 1) + \frac{q - 1}{\ell f} \left( 1 + t + \cdots + t^{f - 1} \right)
\in R'$;

\item
\label{Lemma:Si1AndCyclotomicUnitsEllM2Congruence}
$\ind(\eta_{\ell - 2, j}) + \frac{q - 1}{\ell f} \equiv 0 \pmod{\ell}$ for $j
\in [1, f - 1]$.

\end{enumerate}
\end{enumerate}
\end{lemma}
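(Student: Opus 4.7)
The plan is a direct coefficient comparison inside $R$, exploiting the previously established \autoref{Si1AndCyclotomicUnitsComputation} together with the fact that $R'$ consists exactly of the constant polynomials.

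First I would record the key structural observation about $R'$: since every $r \in R$ has a unique representation $r = a_0 + a_1 t + \cdots + a_{f-1} t^{f-1}$ with coefficients in $\mathbf{Z}/\ell \mathbf{Z}$, the element $r$ lies in the subring $R' = \mathbf{Z}/\ell \mathbf{Z}$ if and only if $[t^j](r) = 0$ for every $j \in [1, f-1]$. So both parts of the lemma reduce to analyzing non-constant coefficients of an element of $R$.

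For Part (A), by \autoref{Si1AndCyclotomicUnitsComputation}, $[t^j] S(i,1) \equiv \ind \eta_{i,j} \pmod{\ell}$ for each $j \in [1, f-1]$, so $S(i,1) \in R'$ if and only if $\ind \eta_{i,j} \equiv 0 \pmod{\ell}$ for all such $j$; the range $i \in [0, \ell-3]$ is permissible because \autoref{Si1AndCyclotomicUnitsComputation} holds for every $i \in [0, \ell-1]$. For Part (B), I would compute the coefficient of $t^j$ in the element $S(\ell-2, 1) + \frac{q-1}{\ell f}(1 + t + \cdots + t^{f-1})$ for $j \in [1, f-1]$: by linearity it equals $[t^j] S(\ell-2, 1) + \frac{q-1}{\ell f}$, which is congruent to $\ind \eta_{\ell-2, j} + \frac{q-1}{\ell f}$ modulo $\ell$ by the same lemma. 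Vanishing of all these coefficients is exactly the condition in \ref{Lemma:Si1AndCyclotomicUnitsEllM2Congruence}.

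There is essentially no obstacle here: \autoref{Si1AndCyclotomicUnitsComputation} already carries the entire analytic content, identifying a combinatorial sum $[t^j] S(i, 1)$ with the index of the cyclotomic unit $\eta_{i,j}$. The statements of parts (A) and (B) are calibrated precisely so that coefficient-by-coefficient comparison yields the claimed equivalences. The only thing to watch is that $\frac{q-1}{\ell f}$ is a well-defined element of $\mathbf{Z}/\ell\mathbf{Z}$, which holds because $q \equiv 1 \pmod{\ell f}$.
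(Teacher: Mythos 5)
Your proposal is correct and is essentially identical to the paper's own proof: both reduce membership in $R'$ to the vanishing of the coefficients $[t^j]r$ for $j \in [1, f-1]$ and then invoke \autoref{Si1AndCyclotomicUnitsComputation} coefficient by coefficient. The extra care you take with the constant $\frac{q-1}{\ell f}$ in part (B) is a harmless elaboration of the same argument.
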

\begin{proof}
For any $r \in R$, the condition $r \in R'$ is equivalent to $[t^{j}] r \equiv
0 \pmod{\ell}$ for $j \in [1, f - 1]$. Apply this observation to $r \in \{ S(0,
1), \cdots, S(\ell - 3), S(\ell - 2, 1) + \frac{q - 1}{\ell f} \left( 1 + t +
\cdots + t^{f - 1} \right) \}$ and use
\autoref{Si1AndCyclotomicUnitsComputation} to finish.
\end{proof}

\section{A recursion for \texorpdfstring{$S(u, v)$}{S(u, v)}}

In this section, we will investigate the product of expressions of the form
$S(u, v)$. 

\begin{lemma}
\label{Lemma:RecursionForSuv}
For $i \in [1, \ell - 2]$ and $s \in [1, i]$,
\begin{align*}
&(i - s + 1) S(i - s + 1, s) - (s + 1) S(i - s, s + 1) \\
&\equiv
\left(\sum_{r = 0}^{i - s} S(i - s - r, s) S(r, 1)\right) - \left(\sum_{k =
1}^{s}  T(1, s - k) S(i - s, k)\right) - (i - 2 s) S(i - s, s) \\
&\pmod {R'}.
\end{align*}
\end{lemma}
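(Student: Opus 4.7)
The plan is to expand both sides as double sums over $\mathbf{F}_{q} \setminus \{0, 1\}$ and show they agree modulo the subring $R'$ of scalars. First, I would apply the identity $(b+1)\binom{a}{b+1} = (a-b)\binom{a}{b}$ (\autoref{Lemma:BinomialCongruencesPascalModified}) to both terms on the left, rewriting the LHS as $A - B - (i - 2s) S(i-s, s)$, where
\[
A \colonequals \sum_{x \neq 0, 1} \ind(x) \binom{\ind x}{i-s}\binom{\ind(1-x)}{s} t^{\ind x}
\]
and $B$ is the analogous sum with $\ind(x)$ replaced by $\ind(1-x)$. Since $-(i-2s) S(i-s, s)$ already appears on the RHS, the claim reduces to
\[
A - B \equiv \sum_{r=0}^{i-s} S(i-s-r, s) S(r, 1) - \sum_{k=1}^{s} T(1, s-k) S(i-s, k) \pmod{R'}.
\]

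Next, I would expand the convolution $\sum_{r} S(i-s-r, s) S(r, 1)$ as a double sum over $(x, y)$; \autoref{Lemma:BinomialCongruencesVandermonde} collapses the sum over $r$ to $\binom{\ind x + \ind y}{i-s}$. Substituting $z = xy$ (using \autoref{Lemma:BinomialCongruencesReduceTop} to replace $\ind x + \ind y$ by $\ind z$ modulo $\ell$), and writing $\ind(1 - z/x) \equiv \ind(x-z) - \ind(x) \pmod \ell$ from $1 - z/x = (x-z)/x$ splits the double sum as $\Sigma_{1} - \Sigma_{2}$. The piece $\Sigma_{2}$ (containing only $\ind(x)$) evaluates by interchanging summation and using $\sum_{z \neq 0, x} \binom{\ind z}{i-s} t^{\ind z} = S(i-s, 0) - \binom{\ind x}{i-s} t^{\ind x}$ (plus an $R'$ correction when $i-s = 0$), giving $\Sigma_{2} \equiv T(1, s) S(i-s, 0) - A \pmod{R'}$.

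In parallel, I would massage $E_{1} \colonequals \sum_{k=0}^{s} T(1, s-k) S(i-s, k)$: unfolding $T(1, s-k)$ and applying Vandermonde in reverse yields
\[
E_{1} = \sum_{x \neq 0, 1} \binom{\ind x}{i-s} t^{\ind x} \sum_{y \neq 0, 1} \ind(y) \binom{\ind(1-x) + \ind(1-y)}{s}.
\]
Substituting $y = 1 - z$ and then $v = (1-x) z$ in the inner sum, combined with $\sum_{v \neq 0, 1-x} \binom{\ind v}{s} \equiv -\binom{\ind(1-x)}{s} \pmod{\ell}$ (which uses $T(0, s) \equiv 0 \pmod{\ell}$ from \autoref{T0iDivisibleByEll}, valid since $s \in [1, \ell-2]$), should give $E_{1} = \Sigma_{1} + B$. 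Combining all three identities,
\[
\text{LHS} - \text{RHS} \equiv A - B - (\Sigma_{1} - \Sigma_{2}) + (E_{1} - T(1, s) S(i-s, 0)) \equiv -B - \Sigma_{1} + E_{1} = 0 \pmod{R'}.
\]
The hard part will be the third paragraph: carefully tracking boundary terms through the two changes of variables in $E_{1}$, verifying that excluded indices (such as $v = 1$, which contributes zero via $\binom{0}{s} = 0$ for $s \geq 1$) do not spoil the identity, and confirming that $T(0, s)$ vanishes modulo $\ell$ precisely in the hypothesized range $s \leq i \leq \ell - 2$.
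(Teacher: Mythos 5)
Your proposal is correct and follows essentially the same route as the paper: the convolution $\sum_{r} S(i-s-r,s)S(r,1)$ is expanded via Vandermonde and the substitution $z = xy$, split using $\ind(1-z/x) \equiv \ind(x-z)-\ind(x)$, with one piece handled by the diagonal-term extraction (your $\Sigma_2 \equiv T(1,s)S(i-s,0) - A$) and the other by the cross-ratio substitution together with \autoref{Lemma:BinomialCongruencesPascalModified} and the vanishing of $T(0,s)$ for $s \in [1,\ell-2]$. The only difference is cosmetic: you verify the identity $E_1 = \Sigma_1 + B$ by unfolding $E_1$ and changing variables, whereas the paper transforms $\Sigma_1$ (its $A$) in the forward direction; the boundary-term checks you flag all go through as you anticipate.
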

\begin{proof}
By definition of $S(u, v)$,
\begin{align*}
&\sum_{r = 0}^{i - s} S(i - s - r, s) S(r, 1) \\
&= \sum_{y, z \in \mathbf{F}_{q} \setminus \{ 0, 1 \} } \sum_{r = 0}^{i - s}
\binom{\ind(y)}{i - s - r} \binom{\ind(z)}{r} \binom{\ind(1 - y)}{s} \ind(1 - z)
t^{\ind y} t^{\ind z} \\
&= \sum_{y, z \in \mathbf{F}_{q} \setminus \{ 0, 1 \} } 
\binom{\ind(y) + \ind(z)}{i - s} \binom{\ind(1 - y)}{s} \ind(1 - z)
t^{\ind y + \ind z} \\
&\qquad\text{(by \autoref{Lemma:BinomialCongruencesVandermonde})} \\
&= \sum_{y, z \in \mathbf{F}_{q} \setminus \{ 0, 1 \} } 
\binom{\ind(y z)}{i - s} \binom{\ind(1 - y)}{s} \ind(1 - z)
t^{\ind (y z)} \\
&\qquad\text{(by \autoref{Lemma:IndyzAdditive},
\autoref{Lemma:BinomialCongruencesReduceTop}, and
} t^{q - 1} = 1\text{)} \\
&= \sum_{\substack{x \in \mathbf{F}_{q} \setminus \{ 0 \} \\ y \in
\mathbf{F}_{q} \setminus \{ 0, 1, x \} }} \binom{\ind (1 - y)}{s} \ind\left( 1 -
\frac{x}{y} \right) \binom{\ind x}{i - s} t^{\ind x} \\
&\qquad\text{(by setting }x \colonequals y z\text{)} \\
&= \sum_{\substack{x \in \mathbf{F}_{q} \setminus \{ 0 \} \\ y \in
\mathbf{F}_{q} \setminus \{ 0, 1, x \} }} \binom{\ind (1 - y)}{s} \left( \ind(y
- x) - \ind(y) \right) \binom{\ind x}{i - s} t^{\ind x} \\
&\qquad\text{(by \autoref{Lemma:IndyzAdditive})} \\
&\equiv \sum_{\substack{x \in \mathbf{F}_{q} \setminus \{ 0, 1 \} \\ y \in
\mathbf{F}_{q} \setminus \{ 0, 1, x \} }} \binom{\ind (1 - y)}{s} \left( \ind(y
- x) - \ind(y) \right) \binom{\ind x}{i - s} t^{\ind x} \pmod{R'},
\end{align*}
so if we define
\begin{align*}
A &\colonequals \sum_{\substack{x \in \mathbf{F}_{q} \setminus \{ 0, 1 \} \\ y
\in \mathbf{F}_{q} \setminus \{ 0, 1, x \} }} \binom{\ind (1 - y)}{s} \ind(y -
x) \binom{\ind x}{i - s} t^{\ind x} \\
B &\colonequals \sum_{\substack{x \in \mathbf{F}_{q} \setminus \{ 0, 1 \} \\ y
\in \mathbf{F}_{q} \setminus \{ 0, 1, x \} }} \binom{\ind (1 - y)}{s} \ind(y)
\binom{\ind x}{i - s} t^{\ind x},
\end{align*}
then
\begin{equation}
\label{Equation:AminusB}
\sum_{r = 0}^{i - s} S(i - s - r, s) S(r, 1) \equiv A - B \pmod{R'}.
\end{equation}
We have
\begin{align*}
B &= \sum_{\substack{x \in \mathbf{F}_{q} \setminus \{ 0, 1 \} \\ y \in
\mathbf{F}_{q} \setminus \{ 0, 1 \} }} \binom{\ind (1 - y)}{s} \ind(y)
\binom{\ind x}{i - s} t^{\ind x} \\
&\quad- \sum_{\substack{x \in \mathbf{F}_{q}
\setminus \{ 0, 1 \} \\ y = x }} \binom{\ind (1 - y)}{s} \ind(y) \binom{\ind
x}{i - s} t^{\ind x} \\
&= \left( \sum_{y \in \mathbf{F}_q \setminus \{ 0, 1 \}} \ind(y) \binom{\ind (1
- y)}{s} \right) \left(\sum_{x \in \mathbf{F}_q \setminus \{ 0, 1 \}}
\binom{\ind x}{i - s} t^{\ind x} \right)  \\
&\quad- \sum_{x \in \mathbf{F}_{q} \setminus \{ 0, 1 \}} \binom{\ind (1 - x)}{s}
\ind(x) \binom{\ind x}{i - s} t^{\ind x} \\ &= T(1, s) S(i - s, 0) \\
&\quad- \sum_{x \in \mathbf{F}_{q} \setminus \{ 0, 1 \}}
\binom{\ind (1 - x)}{s} \left( (i - s + 1) \binom{\ind x}{i - s + 1} + (i - s)
\binom{\ind x}{i - s}  \right) t^{\ind x} \\
&\qquad \text{(by definition of }T(1, s),\, S(i - s, 0)\text{, and
\autoref{Lemma:BinomialCongruencesPascalModified})} \\
&= T(1, s) S(i - s, 0) - (i - s + 1) S(i - s + 1, s) - (i - s) S(i - s, s) \\
&\qquad \left(\text{by definition of }S(i - s + 1, s)\text{ and } S(i - s,
s)\right).
\end{align*}
Since $s \ge 1$, the summand in $A$ vanishes when $y = 0$, so we can put it back
in to get
\begin{align*}
A &= \sum_{\substack{x \in \mathbf{F}_{q} \setminus \{ 0, 1 \} \\ y \in
\mathbf{F}_{q} \setminus \{ 1, x \} }} \binom{\ind (1 - y)}{s} \ind(y - x)
\binom{\ind x}{i - s} t^{\ind x} \\
&= \sum_{\substack{x \in \mathbf{F}_{q} \setminus \{ 0, 1 \} \\ w \in
\mathbf{F}_{q} \setminus \{ 0, 1 \} }} \binom{\ind ( (1 - x)(1 - w) )}{s} \ind(
(1 - x) w) \binom{\ind x}{i - s} t^{\ind x}\\
&\qquad\text{(by setting } w \colonequals (x - y)/(x - 1) \text{)} \\
&= \sum_{k = 0}^{s} \sum_{\substack{x \in \mathbf{F}_{q} \setminus \{ 0, 1 \} \\
w \in \mathbf{F}_{q} \setminus \{ 0, 1 \} }} \binom{\ind(1 - x)}{k}
\binom{\ind(1 - w)}{s - k} \ind\left( 1 - x \right) \binom{\ind x}{i - s}
t^{\ind x} \\
&\quad+ \sum_{k = 0}^{s} \sum_{\substack{x \in \mathbf{F}_{q} \setminus \{ 0, 1
\} \\ w \in \mathbf{F}_{q} \setminus \{ 0, 1 \} }} \binom{\ind(1 - x)}{k}
\binom{\ind(1 - w)}{s - k} \ind\left( w \right) \binom{\ind x}{i - s}
t^{\ind x} \\
&\qquad\text{(by \autoref{Lemma:IndyzAdditive},
\autoref{Lemma:BinomialCongruencesReduceTop}, and
\autoref{Lemma:BinomialCongruencesVandermonde})} \\
&= \sum_{k = 0}^{s} \sum_{\substack{x \in \mathbf{F}_{q} \setminus \{ 0, 1 \} \\
w \in \mathbf{F}_{q} \setminus \{ 0, 1 \} }} \left( (k + 1) \binom{\ind(1 -
x)}{k + 1} + k \binom{\ind(1 - x)}{k} \right) \binom{\ind(1 - w)}{s - k}
\binom{\ind x}{i - s} t^{\ind x} \\
&\quad+ \sum_{k = 0}^{s} \sum_{\substack{x \in \mathbf{F}_{q} \setminus \{ 0, 1
\} \\ w \in \mathbf{F}_{q} \setminus \{ 0, 1 \} }} \binom{\ind(1 - x)}{k}
\binom{\ind(1 - w)}{s - k} \ind\left( w \right) \binom{\ind x}{i - s}
t^{\ind x} \\
&\qquad\text{(by \autoref{Lemma:BinomialCongruencesPascalModified})} \\
&= \sum_{k = 0}^{s} \left[\left( \sum_{w \in \mathbf{F}_{q} \setminus \{ 0, 1 \}}
\binom{\ind(1 - w)}{s - k} \right)\right. \\
&\quad\left.\times \left( \sum_{x \in \mathbf{F}_{q} \setminus \{ 0, 1 \}}
\binom{\ind x}{i - s} \left( (k + 1) \binom{\ind(1 - x)}{k + 1} + k
\binom{\ind(1 - x)}{k} \right)  t^{\ind x} \right)\right] \\
&\quad+ \sum_{k = 0}^{s} \left(\sum_{w \in \mathbf{F}_{q} \setminus \{ 0, 1 \}}
\ind\left( w \right) \binom{\ind(1 - w)}{s - k}  \right) \left(\sum_{x \in
\mathbf{F}_{q} \setminus \{ 0, 1 \}} \binom{\ind x}{i - s} \binom{\ind(1 -
x)}{k} t^{\ind x}\right) \\
&= \left(\sum_{k = 0}^{s} T(0, s - k) \left( (k + 1) S(i - s, k + 1) + k S(i -
s, k) \right)\right) + \sum_{k = 0}^{s} T(1, s - k) S(i - s, k) \\
&\quad\left(\text{by definition of }S(u, v) \text{ and } T(u, v)\right) \\
&= -(s + 1) S(i - s, s + 1) - s S(i - s, s) + \sum_{k = 0}^{s} T(1, s - k) S(i -
s, k) \\
&\quad\left(\text{by \autoref{T0iDivisibleByEll}}\right) \\
&= -(s + 1) S(i - s, s + 1) - s S(i - s, s) + T(1, s) S(i - s, 0) + \sum_{k =
1}^{s} T(1, s - k) S(i - s, k), 
\end{align*}
and we finish by substituting these expressions for $A$ and $B$ into
\eqref{Equation:AminusB}.
\end{proof}

\begin{corollary}
\label{Corollary:RecursionEll}
Suppose that $i \in [1, \ell - 2]$. Assume that $S(u, v) \in R'$ holds whenever
$u + v \le i$ and $v \ge 1$. Then for all $s \in [1, i]$,
\[
(i - s + 1) S(i - s + 1, s) \equiv (s + 1) S(i - s, s + 1) \pmod{R'}.
\]
\end{corollary}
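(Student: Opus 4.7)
The plan is to invoke Lemma \ref{Lemma:RecursionForSuv} directly with the given $i$ and $s$, and then verify that every term on its right-hand side lies in $R'$ under the stated hypothesis. The key structural fact I will use is that $R'$, being the copy of $\mathbf{Z}/\ell\mathbf{Z}$ sitting inside $R$, is a subring of $R$: it is closed under multiplication, and in particular multiplication by a scalar in $\mathbf{Z}/\ell\mathbf{Z}$ preserves $R'$.

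First I apply Lemma \ref{Lemma:RecursionForSuv} to rewrite $(i-s+1)S(i-s+1, s) - (s+1)S(i-s, s+1)$ modulo $R'$ as the sum of three pieces: $\sum_{r=0}^{i-s} S(i-s-r, s) S(r, 1)$, $-\sum_{k=1}^{s} T(1, s-k) S(i-s, k)$, and $-(i-2s) S(i-s, s)$. It then suffices to show that each of these three pieces lies in $R'$.

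Next I check the index conditions factor by factor. In $S(i-s-r, s)$ we have $u+v = i-r \le i$ and $v = s \ge 1$; in $S(r, 1)$ with $r \in [0, i-s]$ we have $u+v = r+1 \le i-s+1 \le i$ (here I use $s \ge 1$) and $v = 1 \ge 1$; in $S(i-s, k)$ with $k \in [1, s]$ we have $u+v = i-s+k \le i$ and $v = k \ge 1$; and in $S(i-s, s)$ we have $u+v = i$ and $v = s \ge 1$. In every case the hypothesis applies, so each such $S$-value is in $R'$. Since $T(1, s-k)$ and $(i-2s)$ are scalars in $\mathbf{Z}/\ell\mathbf{Z}$ and $R'$ is a subring, every product appearing above lies in $R'$, so each of the three pieces lies in $R'$, yielding the desired congruence.

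This step is essentially index bookkeeping on top of Lemma \ref{Lemma:RecursionForSuv}; there is no real obstacle. The only subtlety worth flagging is that the factor $S(r, 1)$ in the first sum needs $r+1 \le i$, which uses $s \ge 1$ — so the hypothesis $s \in [1, i]$ (rather than $s \in [0, i]$) is exactly what is needed.
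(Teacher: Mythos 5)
Your proof is correct and follows exactly the paper's route: apply Lemma~\ref{Lemma:RecursionForSuv} and observe that the hypothesis forces every term on its right-hand side into the subring $R'$. The paper states this in one sentence; your index bookkeeping (in particular the check that $S(r,1)$ needs $r+1\le i$, which uses $s\ge 1$) just makes that sentence explicit.
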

\begin{proof}
The assumptions imply that all the terms on the right hand side of
\autoref{Lemma:RecursionForSuv} lie in $R'$, so \autoref{Lemma:RecursionForSuv}
implies the corollary.
\end{proof}

\begin{corollary}
\label{Corollary:RecursionEllM1}
Suppose that $i \in [1, \ell - 2]$.  Assume that $S(u, v) \in R'$ holds whenever
$u + v \le i$ and $v \ge 1$. Then if one of 
\[
S(i, 1),\; S(i - 1, 2),\; \ldots,\; S(0, i + 1)
\]
is in $R'$, then they must all be in $R'$.
\end{corollary}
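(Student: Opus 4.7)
The plan is to chain together the recursions from \autoref{Corollary:RecursionEll}. Since the hypothesis of the present corollary is identical to that of \autoref{Corollary:RecursionEll}, I would invoke it to obtain, for each $s \in [1, i]$, the congruence
\[
(i - s + 1) S(i - s + 1, s) \equiv (s + 1) S(i - s, s + 1) \pmod{R'}.
\]
These $i$ congruences link the $i + 1$ elements $S(i, 1), S(i - 1, 2), \ldots, S(0, i + 1)$ in a single chain, with consecutive pairs related by an integer-linear congruence modulo $R'$.

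Next I would check that the coefficients $i - s + 1$ and $s + 1$ are units in $\mathbf{Z}/\ell\mathbf{Z}$. Since $i \in [1, \ell - 2]$ and $s \in [1, i]$, we have $i - s + 1 \in [1, \ell - 2]$ and $s + 1 \in [2, \ell - 1]$, both nonzero modulo $\ell$. The key observation is then that for any unit $c \in (\mathbf{Z}/\ell\mathbf{Z})^{\times}$ and any $r \in R$, the condition $c \cdot r \in R'$ is equivalent to $r \in R'$: writing $r = \sum_{j = 0}^{f - 1} a_j t^j$ with $a_j \in \mathbf{Z}/\ell\mathbf{Z}$, the product $c \cdot r$ has coefficients $c a_j$, which vanish for $j \geq 1$ precisely when the $a_j$ do.

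Combining these two ingredients, each of the $i$ congruences yields $S(i - s + 1, s) \in R'$ if and only if $S(i - s, s + 1) \in R'$. Walking this equivalence through $s = 1, 2, \ldots, i$ propagates $R'$-membership along the full chain, so the $i + 1$ elements $S(i, 1), S(i - 1, 2), \ldots, S(0, i + 1)$ are either all in $R'$ or none are. I do not anticipate any real obstacle, since \autoref{Corollary:RecursionEll} has already packaged the hard combinatorial work; the only thing left is the scalar-invertibility check and the elementary fact that multiplication by a unit preserves $R'$-membership.
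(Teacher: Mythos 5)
Your proposal is correct and matches the paper's proof essentially verbatim: both invoke \autoref{Corollary:RecursionEll} to get the chain of congruences, observe that $i - s + 1$ and $s + 1$ lie in $[1, \ell - 1]$ and hence are invertible modulo $\ell$, and propagate $R'$-membership along the chain. The extra detail you supply about why multiplication by a unit preserves $R'$-membership is a harmless elaboration of the same argument.
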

\begin{proof}
For $s \in [1, i]$, \autoref{Corollary:RecursionEll} implies
\[
(i - s + 1) S(i - s + 1, s) \equiv (s + 1) S(i - s, s + 1) \pmod {R'},
\]
so since $i - s + 1$ and $s + 1$ are invertible modulo $\ell$ (they lie in $[1,
\ell - 1]$), 
\[
S(i - s + 1, s) \in R'\text{ if and only if }S(i - s, s + 1) \in R'.
\]
Since this holds for all $s \in [1, i]$, we are done. 
\end{proof}
\section{Proof of main theorem}
\label{MainTheoremSection}

Now we combine all of our results from the previous sections in the following
lemma.
\begin{lemma}
\label{Lemma:UpToEllMinus1}
For $k \in [1, \ell - 1]$, the following are equivalent:
\begin{enumerate}[label=\upshape(\arabic*),
ref=\autoref{Lemma:UpToEllMinus1}(\arabic*)]

\item \label{Lemma:UpToEllMinus1Su1}
$S(0, 1), S(1, 1), \cdots, S(k- 2, 1)$ lie in $R'$;

\item \label{Lemma:UpToEllMinus1Suv}
$S(u, v)$ lies in $R'$ for $u \ge 0$ and $v \ge 1$ satisfying $u + v
\le k - 1$;

\item \label{Lemma:UpToEllMinus1S0v}
$S(0, 1), S(0, 2), \cdots, S(0, k - 1)$ lie in $R'$;

\item \label{Lemma:UpToEllMinus1Congruence}
$J(\ell, f) + 1 \in \pi_{\ell}^{k} \mathcal{O}_{L}$.

\end{enumerate}
\end{lemma}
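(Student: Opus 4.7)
The plan is to establish the four-way equivalence via (3) $\Leftrightarrow$ (4), the easy implications (2) $\Rightarrow$ (1) and (2) $\Rightarrow$ (3), and then to close the loop with (1) $\Rightarrow$ (2) and (3) $\Rightarrow$ (2) by induction on the antidiagonal $u + v$.

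First, (3) $\Leftrightarrow$ (4) is immediate from \autoref{Lemma:JellfValuationS0iUpToEllM1}, which has already been proved. The implications (2) $\Rightarrow$ (1) and (2) $\Rightarrow$ (3) are trivial: the sets of pairs $(u, v)$ appearing in (1) and (3) are both subsets of those in (2), since (1) uses $v = 1$ with $u \le k - 2$ (so $u + v \le k - 1$) and (3) uses $u = 0$ with $v \le k - 1$.

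The substantive direction is (1) $\Rightarrow$ (2), and symmetrically (3) $\Rightarrow$ (2). I would prove both by induction on $i \in [1, k - 1]$ on the statement: every $S(u, v)$ with $u + v = i$ and $v \ge 1$ lies in $R'$. The case $k = 1$ is vacuous. The base case $i = 1$ forces $(u, v) = (0, 1)$, and $S(0, 1) \in R'$ is directly assumed by both (1) and (3). Given the inductive hypothesis at all levels $\le i$, \autoref{Corollary:RecursionEllM1} applies at level $i+1$ and tells us that the antidiagonal
\[
S(i, 1), \; S(i - 1, 2), \; \ldots, \; S(0, i + 1)
\]
either lies entirely inside $R'$ or entirely outside $R'$. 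Under hypothesis (1) we have $S(i, 1) \in R'$ because $i \le k - 2$; under hypothesis (3) we have $S(0, i + 1) \in R'$ because $i + 1 \le k - 1$. Either hypothesis therefore pins a single element of the antidiagonal into $R'$, forcing the whole antidiagonal in, and the induction advances.

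The main obstacle has already been handled upstream: it is the combinatorial identity behind \autoref{Lemma:RecursionForSuv} and its packaging as \autoref{Corollary:RecursionEllM1} that allows membership in $R'$ to propagate along each antidiagonal. With that tool in hand, the present lemma is essentially an assembly of prior results, and the only care needed is to match the index ranges in (1) and (3) against the length of the antidiagonal so that the single seed $S(i, 1)$ or $S(0, i + 1)$ used in the inductive step is actually known to lie in $R'$.
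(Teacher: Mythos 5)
Your proposal is correct and follows the same route as the paper: the paper likewise deduces the equivalence of (1), (2), (3) from \autoref{Corollary:RecursionEllM1} (your antidiagonal induction is exactly the argument the paper leaves implicit) and gets (3) $\Longleftrightarrow$ (4) from \autoref{Lemma:JellfValuationS0iUpToEllM1}. Your write-up just makes the induction and the index bookkeeping explicit.
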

\begin{proof}
\autoref{Corollary:RecursionEllM1} implies that conditions
\cref{Lemma:UpToEllMinus1Su1,Lemma:UpToEllMinus1Suv,Lemma:UpToEllMinus1S0v} are
equivalent. By \autoref{Lemma:JellfValuationS0iUpToEllM1}, conditions
\cref{Lemma:UpToEllMinus1S0v,Lemma:UpToEllMinus1Congruence} are equivalent.
\end{proof}

\CongruenceUpToEllMinusOne
\begin{proof}
Combine \autoref{Lemma:Si1AndCyclotomicUnitsUpToEllM3} and
\autoref{Lemma:UpToEllMinus1Su1} $\Longleftrightarrow$
\autoref{Lemma:UpToEllMinus1Congruence} for
\autoref{Theorem:CongruenceUpToEllMinus1Congruence} $\Longleftrightarrow$
\autoref{Theorem:CongruenceUpToEllMinus1Units}.

\autoref{Lemma:LastIndCongruencesEtaEta} implies that for $i \in [0, \ell - 3]$
and $j \in [1, f / 2]$, $\ind \eta_{i, f - j}$ is a linear combination of
$\ind_{0, j}, \dots, \ind_{i, j}$ modulo $\ell$, and this implies
\autoref{Theorem:CongruenceUpToEllMinus1Units} $\Longleftrightarrow$
\autoref{Theorem:CongruenceUpToEllMinus1UnitsHalf}.
\end{proof}

\section{The case \texorpdfstring{$k = \ell$}{k = ell}}
\label{Section:kEqualsEll}

\begin{lemma}
\label{Lemma:ReductionToDivisibilityInRpIfkEqualsEll2}
The following are equivalent.

\begin{enumerate}[label=\upshape{(\arabic*)},
ref={\autoref{Lemma:ReductionToDivisibilityInRpIfkEqualsEll2}(\arabic*)}]

\item 
\label{Lemma:ReductionToDivisibilityInRpIfkEqualsEll2Congruence}

$J(\ell, f) + 1 \in \pi_{\ell}^{\ell}\mathcal{O}_{L}$;

\item
\label{Lemma:ReductionToDivisibilityInRpIfkEqualsEll2R'}
$S(0, 1),\; S(1, 1),\; \cdots,\; S(\ell - 3, 1),\; S(\ell - 2, 1) +
\displaystyle\frac{q - 1}{\ell f} (1 + t + t^2 + \cdots + t^{f - 1}) \in R'$.

\end{enumerate}
\end{lemma}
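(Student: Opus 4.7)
The plan is to combine \autoref{Lemma:JellfValuationS0iEll}, \autoref{Lemma:UpToEllMinus1}, and \autoref{Corollary:RecursionEll} to translate the congruence condition on $J(\ell,f)+1$ into a condition on the $S(i,1)$'s, reducing matters to a single nontrivial identity relating $S(\ell-2,1)$ and $S(0,\ell-1)$ modulo $R'$.

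First I would invoke \autoref{Lemma:JellfValuationS0iEll} to rewrite condition \autoref{Lemma:ReductionToDivisibilityInRpIfkEqualsEll2Congruence} as
\[
S(0,1),\; S(0,2),\; \dots,\; S(0,\ell-2),\; S(0,\ell-1) - \tfrac{q-1}{\ell f}(1+t+\cdots+t^{f-1}) \in R'.
\]
The first $\ell-2$ of these, together with $k = \ell - 1$ in \autoref{Lemma:UpToEllMinus1}, are equivalent to $S(0,1),\,S(1,1),\,\dots,\,S(\ell-3,1)\in R'$ and also force $S(u,v)\in R'$ for all $u\ge 0$, $v\ge 1$ with $u+v\le \ell-2$. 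So the remaining content of the lemma is the equivalence, under those hypotheses, between
\[
S(0,\ell-1) - \tfrac{q-1}{\ell f}(1+t+\cdots+t^{f-1}) \in R' \quad \text{and} \quad S(\ell-2,1) + \tfrac{q-1}{\ell f}(1+t+\cdots+t^{f-1}) \in R'.
\]

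To establish this, I would apply \autoref{Corollary:RecursionEll} with $i=\ell-2$ (whose hypothesis is exactly the divisibility just obtained). For each $s\in[1,\ell-2]$ this yields
\[
(\ell-1-s)\, S(\ell-1-s,\,s) \equiv (s+1)\, S(\ell-2-s,\,s+1) \pmod{R'}.
\]
Chaining these congruences from $s=1$ up to $s=\ell-2$ and multiplying through the nonzero (mod $\ell$) coefficients gives
\[
(\ell-2)!\, S(\ell-2,1) \equiv (\ell-1)!\, S(0,\ell-1) \pmod{R'},
\]
so by Wilson's theorem $(\ell-1)!\equiv -1\pmod\ell$ we get $S(\ell-2,1) \equiv -S(0,\ell-1)\pmod{R'}$. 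Adding $\tfrac{q-1}{\ell f}(1+t+\cdots+t^{f-1})$ to both sides, the two remaining conditions are negatives of each other modulo $R'$, so one lies in $R'$ iff the other does.

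I do not anticipate a serious obstacle: the only slightly delicate point is noticing that the sign coming from Wilson's theorem exactly matches the opposite signs on $\tfrac{q-1}{\ell f}(1+t+\cdots+t^{f-1})$ appearing in conditions \autoref{Lemma:ReductionToDivisibilityInRpIfkEqualsEll2R'} and the formulation in \autoref{Lemma:JellfValuationS0iEllInR'}, which is precisely what makes the two forms equivalent rather than one being strictly stronger.
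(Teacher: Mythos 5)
Your proposal is correct and follows essentially the same route as the paper: translate via \autoref{Lemma:JellfValuationS0iEll}, use \autoref{Lemma:UpToEllMinus1} with $k=\ell-1$ to pass between the $S(0,v)$ and $S(u,1)$ forms, and chain \autoref{Corollary:RecursionEll} with $i=\ell-2$ to get $S(\ell-2,1)\equiv -S(0,\ell-1)\pmod{R'}$. The only cosmetic difference is that you extract the sign via Wilson's theorem, whereas the paper tracks it as $(-1)^{\ell-2}\equiv -1\pmod{\ell}$.
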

\begin{proof}

By \autoref{Lemma:JellfValuationS0iEll},

\begin{itemize}
\item $J(\ell, f) + 1 \in \pi_{\ell}^{\ell}\mathcal{O}_{L}$
\end{itemize}
is equivalent to
\begin{itemize}
\item $S(0, 1)$, $S(0, 2)$, \ldots, $S(0, \ell - 2)$ lie in $R'$, and
\item $S(0, \ell - 1) - \frac{q - 1}{\ell f} (1 + t + \cdots + t^{f - 1})$ lies
in $R'$,
\end{itemize}
which by \autoref{Lemma:UpToEllMinus1S0v}
$\Longleftrightarrow$ \autoref{Lemma:UpToEllMinus1Suv}, is equivalent to
\begin{itemize}
\item for $u \ge 0$ and $v \ge 1$ satisfying $u + v
\le \ell - 2$, $S(u, v)$ lies in $R'$, and
\item $S(0, \ell - 1) - \frac{q - 1}{\ell f} (1 + t + \cdots + t^{f - 1})$ lies
in $R'$,
\end{itemize}
which by \autoref{Corollary:RecursionEll}, is equivalent to
\begin{itemize}
\item for $u \ge 0$ and $v \ge 1$ satisfying $u + v
\le \ell - 2$, $S(u, v)$ lies in $R'$, 
\item for all $s \in [1, \ell - 2]$, $(\ell - 1 - s) S(\ell - 1 - s, s) \equiv
(s + 1) S(\ell - 2 - s, s + 1) \pmod{R'}$, and
\item $S(0, \ell - 1) - \frac{q - 1}{\ell f} (1 + t + \cdots + t^{f - 1})$ lies
in $R'$,
\end{itemize}
which is equivalent to
\begin{itemize}
\item for $u \ge 0$ and $v \ge 1$ satisfying $u + v
\le \ell - 2$, $S(u, v)$ lies in $R'$, 
\item for all $s \in [1, \ell - 2]$, $(\ell - 1 - s) S(\ell - 1 - s, s) \equiv
(s + 1) S(\ell - 2 - s, s + 1) \pmod{R'}$, and
\item $S(\ell - 2, 1) - (-1)^{\ell - 2} \frac{q - 1}{\ell f} (1 + t + \cdots +
t^{f - 1})$ lies in $R'$,
\end{itemize}
which by \autoref{Corollary:RecursionEll}, is equivalent to
\begin{itemize}
\item for $u \ge 0$ and $v \ge 1$ satisfying $u + v
\le \ell - 2$, $S(u, v)$ lies in $R'$, 
\item $S(\ell - 2, 1) - (-1)^{\ell - 2} \frac{q - 1}{\ell f} (1 + t + \cdots +
t^{f - 1})$ lies in $R'$,
\end{itemize}
which by \autoref{Lemma:UpToEllMinus1Suv} $\Longleftrightarrow$
\autoref{Lemma:UpToEllMinus1Su1}, is equivalent to
\begin{itemize}
\item $S(0, 1)$, $S(1, 1)$, \ldots, $S(\ell - 3, 1)$ lies in $R'$, 
\item $S(\ell - 2, 1) - (-1)^{\ell - 2} \frac{q - 1}{\ell f} (1 + t + \cdots +
t^{f - 1})$ lies in $R'$,
\end{itemize}
and we are done by observing that $(-1)^{\ell - 2} \equiv -1 \pmod{\ell}$.
\qedhere

\end{proof}
\begin{lemma}
\label{Lemma:IndexDivisibilityForKEqualsEll}
The following are equivalent:
\begin{enumerate}[label=\upshape{(\arabic*)},
ref={\autoref{Lemma:IndexDivisibilityForKEqualsEll}(\arabic*)}]

\item 
\label{Lemma:IndexDivisibilityForKEqualsEllJacobiCongruence}
$J(\ell, f) + 1 \in \pi_{\ell}^{\ell} \mathcal{O}_{L}$;

\item
\label{Lemma:IndexDivisibilityForKEqualsEllIndCongruence}
All the following are divisible by $\ell$:
\[
\begin{tabu}{cccc}
\ind(\eta_{0, 1}) &\ind(\eta_{0, 2}) &\ldots &\ind(\eta_{0, f - 1}) \\
\ind(\eta_{1, 1}) &\ind(\eta_{1, 2}) &\ldots &\ind(\eta_{1, f - 1}) \\
\vdots &\vdots &\ddots &\vdots \\
\ind(\eta_{\ell - 3, 1}) &\ind(\eta_{\ell - 3, 2}) &\ldots &\ind(\eta_{\ell - 3,
f - 1}) \\
\ind(\eta_{\ell - 2, 1}) + \frac{q - 1}{\ell f} &\ind(\eta_{\ell - 2, 2}) +
\frac{q - 1}{\ell f} &\ldots &\ind(\eta_{\ell - 2, f - 1}) + \frac{q - 1}{\ell
f}
\end{tabu}
\]

\end{enumerate}

\end{lemma}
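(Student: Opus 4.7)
The plan is to chain together the two preceding results to obtain the stated equivalence without introducing any new ideas. Specifically, \autoref{Lemma:ReductionToDivisibilityInRpIfkEqualsEll2} already converts statement \autoref{Lemma:IndexDivisibilityForKEqualsEllJacobiCongruence} into the condition
\[
S(0, 1),\; S(1, 1),\; \ldots,\; S(\ell - 3, 1),\; S(\ell - 2, 1) + \frac{q - 1}{\ell f} (1 + t + t^2 + \cdots + t^{f - 1}) \in R',
\]
so it suffices to translate this condition on membership in $R'$ into the divisibility conditions on $\ind(\eta_{i, j})$ listed in \autoref{Lemma:IndexDivisibilityForKEqualsEllIndCongruence}.

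Next I would apply \autoref{Lemma:Si1AndCyclotomicUnitsUpToEllM3} to each of $S(0, 1), S(1, 1), \ldots, S(\ell - 3, 1)$ individually: by that lemma, $S(i, 1) \in R'$ is equivalent to $\ind \eta_{i, j} \equiv 0 \pmod{\ell}$ for all $j \in [1, f - 1]$, which matches the first $\ell - 2$ rows of the displayed array. For the remaining term $S(\ell - 2, 1) + \frac{q - 1}{\ell f}(1 + t + \cdots + t^{f - 1})$, I would invoke \autoref{Lemma:Si1AndCyclotomicUnitEllM2}, which gives precisely the equivalence with $\ind(\eta_{\ell - 2, j}) + \frac{q - 1}{\ell f} \equiv 0 \pmod{\ell}$ for all $j \in [1, f - 1]$, i.e.\ the last row of the array.

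Since every row of the array arises from exactly one of the $\ell - 1$ conditions furnished by \autoref{Lemma:ReductionToDivisibilityInRpIfkEqualsEll2}, the conjunction of all rows is equivalent to the original Jacobi sum congruence, completing the proof. There is no real obstacle here: the lemma is essentially a bookkeeping step that packages the two earlier translations (the reduction to $S(\cdot, 1)$ conditions via \autoref{Lemma:ReductionToDivisibilityInRpIfkEqualsEll2}, and the dictionary between $S(i, 1)$ and $\ind \eta_{i, j}$ via \autoref{Lemma:Si1AndCyclotomicUnits}) into the convenient matrix form needed for the proof of \autoref{Theorem:CongruencekEqualsEll} in \autoref{Section:kEqualsEll}.
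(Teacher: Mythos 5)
Your proof is correct and is essentially identical to the paper's, which simply says to combine \autoref{Lemma:ReductionToDivisibilityInRpIfkEqualsEll2} with \autoref{Lemma:Si1AndCyclotomicUnits}; you have merely spelled out the row-by-row bookkeeping (using part (A) for $i \le \ell - 3$ and part (B) for the last row) that the paper leaves implicit.
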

\begin{proof}
Combine \autoref{Lemma:Si1AndCyclotomicUnitEllM2} and
\autoref{Lemma:ReductionToDivisibilityInRpIfkEqualsEll2}. 
\end{proof}

\begin{corollary}
\label{Corollary:SetupCongruencekEqualsEll}
The following are equivalent:

\begin{enumerate}[label=\upshape(\arabic*),
ref=\autoref{Corollary:SetupCongruencekEqualsEll}(\arabic*)]

\item \label{Corollary:SetupCongruencekEqualsEllCongruence}
$J(\ell, f) + 1 \in \pi_{\ell}^{\ell} \mathcal{O}_{L}$;

\item \label{Corollary:SetupCongruencekEqualsEllUnits}
$\frac{q - 1}{\ell f}\equiv 0 \pmod{\ell}$ and $\ind(1 - \xi_{\ell}^{i}
\xi_{f}^{j}  ) \equiv 0 \pmod{\ell}$ for all $i \in [0, \ell - 1]$ and $j \in [1, f -
1]$;

\item \label{Corollary:SetupCongruencekEqualsEllUnitsHalf}
$\frac{q - 1}{\ell f}\equiv 0 \pmod{\ell}$ and $\ind(1 - \xi_{\ell}^{i}
\xi_{f}^{j}  ) \equiv 0 \pmod{\ell}$ for all for all $i \in [0, \ell - 1]$ and
$j \in [1, f / 2]$.

\end{enumerate}

\end{corollary}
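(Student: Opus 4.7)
My plan is to chain together several lemmas from the preceding sections. \autoref{Lemma:IndexDivisibilityForKEqualsEll} already reformulates \autoref{Corollary:SetupCongruencekEqualsEllCongruence} as a list of divisibilities by $\ell$ of the quantities $\ind \eta_{i, j}$ (shifted by $(q-1)/(\ell f)$ in the $i = \ell - 2$ row); so what remains is to translate this list into the cleaner statement about $\ind(1 - \xi_\ell^i \xi_f^j)$ and $(q-1)/(\ell f)$ appearing in \autoref{Corollary:SetupCongruencekEqualsEllUnits}, and then to pass to the symmetric form \autoref{Corollary:SetupCongruencekEqualsEllUnitsHalf}.

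The easier direction \autoref{Corollary:SetupCongruencekEqualsEllUnits}~$\Longrightarrow$~\autoref{Corollary:SetupCongruencekEqualsEllCongruence} follows by plugging the hypothesis into \autoref{Lemma:LastIndCongruencesEtaXi}: every term in the sum for $\ind \eta_{i, j}$ is divisible by $\ell$, so $\ind \eta_{i, j} \equiv 0 \pmod \ell$ for all $i, j$, and combined with $(q-1)/(\ell f) \equiv 0 \pmod \ell$ this matches \autoref{Lemma:IndexDivisibilityForKEqualsEllIndCongruence}.

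The forward direction \autoref{Corollary:SetupCongruencekEqualsEllCongruence}~$\Longrightarrow$~\autoref{Corollary:SetupCongruencekEqualsEllUnits} is more involved. Starting from \autoref{Lemma:IndexDivisibilityForKEqualsEllIndCongruence}, I would apply \autoref{Lemma:LastIndCongruencesXiEta} to compute $\ind(1 - \xi_\ell^i \xi_f^j)$ for $i \in [1, \ell-1]$. In the double sum $\sum_{s = \ell-1-i}^{\ell-2} \sum_{a=0}^{s} \binom{s}{a} \ind \eta_{\ell-2-a, j}$, the only terms not killed by the hypothesis are those with $a = 0$, each contributing $\binom{s}{0}\ind \eta_{\ell-2,j} \equiv -(q-1)/(\ell f) \pmod \ell$. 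Since the outer $s$-range has exactly $i$ terms and $\ind \eta_{0, m} \equiv 0 \pmod \ell$, this collapses to
\[
\ind(1 - \xi_\ell^i \xi_f^j) \equiv i \cdot \frac{q-1}{\ell f} \pmod \ell.
\]
Feeding this formula (for both $i$ and $\ell - i$) into the symmetry identity \autoref{Lemma:LastIndCongruencesXiXi} yields $\ind(-1) \equiv i \cdot (q-1)/(\ell f) \pmod \ell$ for every $i \in [1, \ell-1]$; taking two distinct values of $i$ (possible since $\ell \geq 3$) forces $(q-1)/(\ell f) \equiv 0 \pmod \ell$. The clean formula then gives $\ind(1 - \xi_\ell^i \xi_f^j) \equiv 0 \pmod \ell$ for $i \in [1, \ell-1]$, and the case $i = 0$ is handled by noting $\eta_{0, m} = 1 - \xi_f^{m\ell}$ (\autoref{Lemma:LastIndCongruencesEta0j}) together with the coprimality of $\ell$ and $f$ (so that $m\ell \bmod f$ ranges over $[1, f-1]$).

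For the equivalence of \autoref{Corollary:SetupCongruencekEqualsEllUnits} and \autoref{Corollary:SetupCongruencekEqualsEllUnitsHalf}, one direction is trivial; for the other, I use \autoref{Lemma:LastIndCongruencesXiXi} (extended to $i = 0$ by the direct manipulation $1 - \xi_f^j = -\xi_f^j (1 - \xi_f^{f-j})$ combined with \autoref{Lemma:LastIndCongruencesXif}) to relate $j \in [f/2+1, f-1]$ to $f - j \in [1, f/2]$; the ambient hypothesis $(q-1)/(\ell f) \equiv 0 \pmod \ell$ together with $\ind(-1) \equiv 0 \pmod \ell$ (automatic for $\ell$ odd) makes the bridge work. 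I expect the main obstacle to be the bookkeeping of the double sum in the forward direction: pinning down exactly which $(s, a)$ pairs survive modulo $\ell$, and then discovering that the surviving contribution is the very clean $i \cdot (q-1)/(\ell f)$, is what makes the subsequent symmetry argument produce the desired vanishing of $(q-1)/(\ell f)$ modulo $\ell$.
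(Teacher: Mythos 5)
Your overall architecture matches the paper's: both pivot on \autoref{Lemma:IndexDivisibilityForKEqualsEll}, both get (2)~$\Rightarrow$~(1) from \autoref{Lemma:LastIndCongruencesEtaXi}, and both get (3)~$\Rightarrow$~(2) from the symmetry \autoref{Lemma:LastIndCongruencesXiXi}. Your computation that the hypothesis collapses the double sum in \autoref{Lemma:LastIndCongruencesXiEta} to $\ind(1 - \xi_{\ell}^{i}\xi_{f}^{j}) \equiv i\,\tfrac{q-1}{\ell f} \pmod{\ell}$ is correct and is a nice intermediate formula the paper does not state. However, there is a genuine gap in your derivation of $\tfrac{q-1}{\ell f} \equiv 0 \pmod{\ell}$ in the direction (1)~$\Rightarrow$~(2): you take two distinct values of $i \in [1, \ell - 1]$ and remark this is ``possible since $\ell \ge 3$,'' but the standing hypotheses only require $\ell$ and $f$ to be two distinct primes, so $\ell = 2$ is allowed (and the paper's proof of (3)~$\Rightarrow$~(2) treats $\ell = 2$ as a separate case for exactly this kind of reason). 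For $\ell = 2$ your route genuinely fails, not just formally: with $i = 1$ the identity \autoref{Lemma:LastIndCongruencesXiXi} combined with your clean formula yields $\tfrac{q-1}{2f} \equiv \ind(-1) = \tfrac{q-1}{2} = f \cdot \tfrac{q-1}{2f} \equiv \tfrac{q-1}{2f} \pmod{2}$ (as $f$ is odd), a tautology carrying no information. The missing input is the $j \leftrightarrow f - j$ relation among the $\eta$'s themselves: the paper applies \autoref{Lemma:LastIndCongruencesEtaEta} with $i = \ell - 2$, where the hypothesis kills all terms of the sum except $k = \ell - 2$ and the comparison of $\ind \eta_{\ell - 2, f - j}$ with $\ind \eta_{\ell - 2, j}$ forces $(-1)^{\ell - 2} \tfrac{q-1}{\ell f} \equiv 0 \pmod{\ell}$ uniformly in $\ell$, including $\ell = 2$.

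A second, smaller instance of the same oversight occurs in your (3)~$\Rightarrow$~(2) step, where you call $\ind(-1) \equiv 0 \pmod{\ell}$ ``automatic for $\ell$ odd'' and leave $\ell = 2$ unaddressed; there the fix is easy and is in the paper: for $\ell = 2$, $\ind(-1) = \tfrac{q-1}{2} = \tfrac{q-1}{\ell}$, which is divisible by $\ell$ because the hypothesis $\tfrac{q-1}{\ell f} \equiv 0 \pmod{\ell}$ gives $q \equiv 1 \pmod{\ell^{2} f}$. The remainder of your argument (the $i = 0$ case via $\eta_{0,m} = 1 - \xi_{f}^{m\ell}$ and the invertibility of $\ell$ modulo $f$) is fine. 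Once you patch the $\ell = 2$ derivation of $\tfrac{q-1}{\ell f} \equiv 0 \pmod{\ell}$ --- most simply by importing the paper's use of \autoref{Lemma:LastIndCongruencesEtaEta} --- your proof goes through.
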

\begin{proof} \hfill

\begin{enumerate}[label=\upshape{(\alph*)}] 

\item 
\autoref{Corollary:SetupCongruencekEqualsEllUnits} $\Longrightarrow$
\autoref{Corollary:SetupCongruencekEqualsEllUnitsHalf}

This is obvious.

\item 
\autoref{Corollary:SetupCongruencekEqualsEllUnits} $\Longrightarrow$
\autoref{Corollary:SetupCongruencekEqualsEllCongruence}

This follows from \autoref{Lemma:LastIndCongruencesEtaXi} and
\autoref{Lemma:IndexDivisibilityForKEqualsEllIndCongruence} $\Longrightarrow$
\autoref{Lemma:IndexDivisibilityForKEqualsEllJacobiCongruence}.

\item

\autoref{Corollary:SetupCongruencekEqualsEllUnitsHalf} $\Longrightarrow$
\autoref{Corollary:SetupCongruencekEqualsEllUnits} 

Suppose that $i \in [0, \ell - 1]$ and $j \in [f / 2, f - 1]$. Then
\begin{align}
&\ind \left( 1 - \xi_{\ell}^{i} \xi_f^{j}  \right) \nonumber \\
&\equiv \ind(-1) + i \left( \frac{q - 1}{\ell f} \right) + \ind \left( 1 -
\xi_{\ell}^{\ell - i} \xi_f^{f - j} \right) \pmod {\ell} \quad\quad\text{(by
\autoref{Lemma:LastIndCongruencesXiXi})} \nonumber \\
&\equiv \ind(-1) \quad\quad\left(\text{since } \frac{q - 1}{\ell f} \equiv 0
\pmod{\ell} \text{ and } f - j \in [1, f / 2] \right).
\label{Equation:IndReducedToIndM1}
\end{align}
If $\ell = 2$, then $\ind(-1) = (q - 1) / 2 = (q - 1) / \ell \equiv 0
\pmod{\ell}$ since $q - 1 \equiv 0 \pmod{\ell^{2} f}$ by assumption. If $\ell$
is odd, then $\ind(-1) = (q - 1) / 2 \equiv 0 \pmod{\ell}$ since $q - 1 \equiv 0
\pmod{\ell}$ and $2$ is coprime to $\ell$. In any case, $\ind(-1) \equiv 0
\pmod{\ell}$ so we are done by \eqref{Equation:IndReducedToIndM1}.

\item

\autoref{Corollary:SetupCongruencekEqualsEllCongruence} $\Longrightarrow$
\autoref{Corollary:SetupCongruencekEqualsEllUnits} 

\autoref{Corollary:SetupCongruencekEqualsEllCongruence} is
\autoref{Lemma:IndexDivisibilityForKEqualsEllJacobiCongruence}, so
\autoref{Lemma:IndexDivisibilityForKEqualsEll} implies that
\autoref{Lemma:IndexDivisibilityForKEqualsEllIndCongruence} holds.  Combining
\autoref{Lemma:IndexDivisibilityForKEqualsEllIndCongruence} with
\autoref{Lemma:LastIndCongruencesEtaEta} with $i = \ell - 2$ (and any value of
$j$) yields
\[
-\left( \frac{q - 1}{\ell f} \right) \equiv - \left( \frac{q -
1}{\ell f} \right) - (-1)^{\ell - 2} \left( \frac{q - 1}{\ell f} \right)
\pmod{\ell},
\]
which implies
\[
\frac{q - 1}{\ell f} \equiv 0 \pmod{\ell}.
\]
Combining this with \autoref{Lemma:IndexDivisibilityForKEqualsEllIndCongruence}
implies that $\ind \eta_{k, j} \equiv 0 \pmod{\ell}$ for all $k \in [0, \ell -
2]$ and $j \in [1, f - 1]$, so \autoref{Lemma:LastIndCongruencesXiEta} gives
that $\ind\left( 1 - \xi_\ell^i \xi_f^j  \right) \equiv 0 \pmod{\ell}$ for all
$i \in [1, \ell - 1]$ and $j \in [1, f - 1]$. \qedhere
\end{enumerate}
\end{proof}

\CongruencekEqualsEll
\begin{proof}
This is a restatement of \autoref{Corollary:SetupCongruencekEqualsEll}.
\end{proof}

% \bibliography{jacobi-v3}
% \bibliographystyle{plain}

\begin{bibdiv}
\begin{biblist}

\bib{ArulTorsion}{article}{
      author={Arul, Vishal},
       title={{Torsion points on Fermat quotients of the form $y^{n} = x^{d} +
  1$}},
        date={2020},
     journal={arXiv preprint arXiv:1910.14251v2},
}

\bib{berndt1998gauss}{book}{
      author={Berndt, Bruce~C},
      author={Williams, Kenneth~S},
      author={Evans, Ronald~J},
       title={{Gauss and {J}acobi sums}},
   publisher={Wiley},
        date={1998},
}

\bib{conrad1995jacobi}{article}{
      author={Conrad, Keith},
       title={{Jacobi sums and Stickelberger's congruence}},
        date={1995},
     journal={Enseign. Math.},
      volume={41},
       pages={141\ndash 141},
}

\bib{evans1998congruences}{article}{
      author={Evans, Ronald},
       title={{Congruences for Jacobi sums}},
        date={1998},
     journal={J. Number Theory},
      volume={71},
      number={1},
       pages={109\ndash 120},
}

\bib{ihara1986profinite}{article}{
      author={Ihara, Yasutaka},
       title={{Profinite braid groups, Galois representations and complex
  multiplications}},
        date={1986},
     journal={Ann. of Math.},
      volume={123},
      number={1},
       pages={43\ndash 106},
}

\bib{iwasawa1975note}{inproceedings}{
      author={Iwasawa, K},
       title={{A note on Jacobi sums}},
        date={1975},
   booktitle={Symposia math},
      volume={15},
       pages={447\ndash 459},
}

\bib{jkedrzejak2014torsion}{article}{
      author={J{\k{e}}drzejak, Tomasz},
       title={{On the torsion of the jacobians of superelliptic curves $y^q =
  x^p + a$}},
        date={2014},
     journal={J. Number Theory},
      volume={145},
       pages={402\ndash 425},
}

\bib{jkedrzejak2016note}{article}{
      author={J{\k{e}}drzejak, Tomasz},
       title={{A note on the torsion of the jacobians of superelliptic curves
  $y^{q} = x^{p} + a$}},
        date={2016},
     journal={Banach Center Publ.},
      volume={108},
      number={1},
       pages={143\ndash 149},
}

\bib{katz1981crystalline}{inproceedings}{
      author={Katz, Nicholas~M.},
       title={{Crystalline cohomology, Dieudonn{\'e} modules, and Jacobi
  sums}},
        date={1981},
   booktitle={Automorphic forms, representation theory and arithmetic},
   publisher={Springer},
       pages={165\ndash 246},
}

\bib{miki1987adic}{inproceedings}{
      author={Miki, Hiroo},
       title={{On the $l$-adic expansion of certain Gauss sums and its
  applications}},
organization={Mathematical Society of Japan},
        date={1987},
   booktitle={Galois representations and arithmetic algebraic geometry},
       pages={87\ndash 118},
}

\bib{uehara1987congruence}{article}{
      author={Uehara, Tsuyoshi},
       title={{On a congruence relation between Jacobi sums and cyclotomic
  units}},
        date={1987},
     journal={J. Reine Angew. Math},
      volume={382},
       pages={199\ndash 214},
}

\end{biblist}
\end{bibdiv}

\end{document}